\def\?[#1]{\textbf{[#1]}\marginpar{\Large{\textbf{??}}}}
\def\smallsection#1{\smallskip\noindent\textbf{#1}.}
\let\epsilon=\varepsilon 
\newtheorem*{theorem*}{Theorem}
\newtheorem{theorem}{Theorem}
\newtheorem{prop}{Proposition}[section]
\newtheorem{defi}[prop]{Definition}
\newtheorem{lemma}[prop]{Lemma}
\newtheorem{rem}{Remark}[section]
\newtheorem{claim}{Claim}[section]
\numberwithin{equation}{section}
\DeclareMathOperator{\Spec}{Spec}
\let\Re=\Real
\DeclareMathOperator{\supp}{supp}
\DeclareMathOperator{\vol}{vol}
\newcommand*{\dd}{\mathop{}\!\mathrm{d}}
\DeclareMathOperator{\dist}{dist}
\DeclareMathOperator{\Tr}{Tr}
\newcommand{\ip}[2]{\left \langle#1,#2 \right \rangle}
\newcommand{\mat}[1]{\begin{pmatrix} #1 \end{pmatrix}}
\newcommand{\set}[1]{ \left \{ #1 \right \}}
\newcommand{\N}{\mathbb{N}}
\newcommand{\p}{\partial}
\newcommand{\dbar}{\overline{\p}}
\newcommand{\Z}{\mathbb{Z}}
\newcommand{\T}{\mathbb{T}}
\newcommand{\E}{\mathbb{E}}
\newcommand{\e}{\varepsilon}
\newcommand{\R}{\mathbb{R}}
\newcommand{\norm}[1]{ \left \| #1 \right \| }
\newcommand{\C}{\mathbb{C}}
\renewcommand{\phi}{\varphi}
\renewcommand{\P}{\mathbb{P}}
\renewcommand{\Re}[1]{{\rm{Re}} \left ( #1\right ) }
\title{A Probabilistic Weyl-Law for Perturbed Berezin-Toeplitz Operators}
\author{Izak Oltman} 
\address[Izak Oltman]{Department of Mathematics, University of California, Berkeley, CA 94720}
\email{izak@math.berkeley.edu}
\date{\today}
\begin{document}

\begin{abstract}
This paper proves a probabilistic Weyl-law for the spectrum of randomly perturbed Berezin-Toeplitz operators, generalizing a result proven by Martin Vogel in \citep{Vogel}. This is done following the strategy of \citep{Vogel} using the exotic symbol calculus developed by the author in \citep{Izak}.
\end{abstract}

\maketitle

\section{Introduction}
This paper generalizes a result of Martin Vogel in \citep{Vogel} which proves a probabilistic Weyl-law for quantizations of functions on tori. Here we do the same, but with the tori replaced by arbitrary K\"ahler manifolds equipped with positive line bundles. 

In \citep{Vogel}, Vogel considers Toeplitz quantizations of smooth functions on a real $2d$-dimensional torus, which associates every smooth function $f$ on the torus to a family of $N^d \times N^d$ matrices, $f_N$, for all $N \in \N$ (here $N^{-1}$ is the semi-classical parameter). A recent physical motivation for such constructions is written by Deleporte in \citep[Section 1]{Deleporte}. Next, a random matrix with sufficiently small norm is added to $f_N$, and the spectrum is shown to obey an almost-sure Weyl-law as $N$ goes to infinity. This was conjectured by Christiansen and Zworski in \citep{Christiansen} and is a major extension of their work. 

This result is most striking when the unperturbed matrix is non-self-adjoint. For example, if $f (x) = \cos (2\pi x) + i \cos ( 2\pi\xi)$, then the quantization is
\begin{align}f_N = 
\begin{pmatrix}
\cos (2\pi /N) & i/2 & 0 & 0 & \cdots & i/2 \\ 
i/2 & \cos(4\pi / N) & i/2 & 0 & \cdots & 0 \\ 
0 & i/2 & \cos(6\pi /N) & i/2 & \ddots & 0 \\ 
\vdots & \ddots & \ddots & \ddots & \ddots & \vdots \\ 
0 & \cdots & 0 & i/2 & \cos(2(N-1)\pi/N) & i/2 \\ 
i/2 & 0 & \cdots & 0 & i/2 & \cos(2\pi)
\end{pmatrix},
\end{align}
which numerically has spectrum contained on two crossing lines in the complex plane. This operator is aptly named the Scottish flag operator and is further described by Embree and Trefethen in \citep{Trefethen}. Interestingly, (as far as we are aware) it is unknown analytically where the spectrum of $f_N$ lives. However, if randomly perturbed, the spectrum spreads out with density given by the push-forward of the Lebesgue measure on the torus by $f$. Figure \ref{fig.1} plots the spectrum of $f_N$ with no perturbation, and with a small perturbation.
\begin{figure}[h!]
\includegraphics[width = \textwidth]{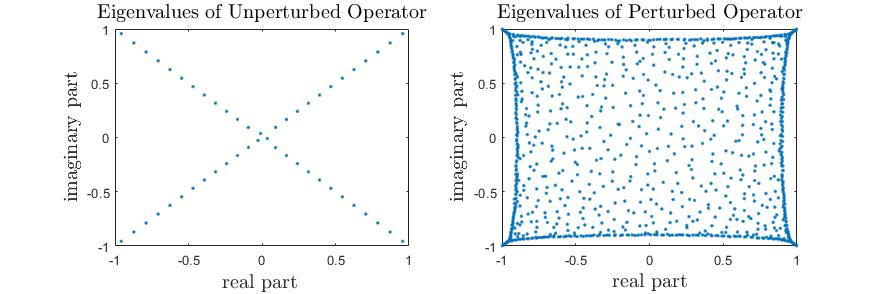}
\caption{Left: Eigenvalues of the Scottish flag operator with $N = 50$. Right: Eigenvalues of the Scottish flag operator with a small random perturbation with $N = 1000$. \label{fig.1}}
\end{figure}

The spectral properties of randomly perturbed non-self-adjoint operators was pioneered by Hager in \citep{Hager}, in which the operator $hD_x + g(x) : L^2 (S^1 ) \to L^2 (S^1)$ was studied. This result, and numerous subsequent results are discussed by Sj\"ostrand in \citep{Sjo_book}. There are related results describing spectral properties of randomly perturbed Toeplitz matrices, which can be defined as quantizations of symbols on $\T^2$ with symbol independent of $x$. See Davies and Hager \cite{davies2009}, Guionnet, Wood and Zeitouni \cite{wood2014}, Sj\"ostrand and Vogel \cite{sjostrand2021} \cite{sjostrand20212}, and references given there.

This paper is the natural generalization of Vogel's result in \citep{Vogel}. Here we prove a similar result for quantizations of functions on K\"{a}hler manifolds (with sufficient structure, as discussed in Section \ref{section:setup}). These quantizations, called Berezin-Toeplitz operators (or just Toeplitz operators) were first described by Berezin in \citep{Berezin} as a particular type of quantization of symplectic manifolds. Following \citep{Berezin}, for every smooth function $f$ on a quantizable K\"ahler manifold $X$, we get a family of finite rank operators, $T_N f$, indexed by $N \in \N$ (see \citep{Rouby} for a connection between these quantizations, and quantizations on the torus) which have physical interpretations. Deleporte in \citep[Appendix A]{Deleporte} relates this quantization to spin systems in the large spin limit, and Douglas and Klevtsov in \citep{Douglas} use path integrals for particles in a magnetic field to derive the Bergman kernel (a key ingredient in constructing $T_N f$).

Next, if we add a small Gaussian-type random perturbation $\mathcal{G}_\omega$ to these operators (see Definition \ref{def:random}), the empirical measures weakly converge almost surely (see Theorem \ref{thm.main} in Section \ref{section:setup} for a precise statement). Theorem \ref{theorem:general perturbations} states a result about more general random perturbations $\mathcal{W}_\omega$ (see Definition \ref{def:random}) but with a more restrictive coupling constant. A consequence of Theorem \ref{theorem:general perturbations} is the following probabilistic Weyl-law.

\begin{theorem}[\textbf{A Probabilistic Weyl-law}]\label{thm:weyl}
Given a quantizable K\"ahler manifold $X$, $f\in C^\infty(X;\C)$ such that there exists $\kappa \in (0,1]$ so that
\begin{align}
\mu_d (\set{x \in X : |f(x) - z|^2 \le t }) = \mathcal{O}(t^\kappa)
\end{align}
as $t\to 0$ uniformly for $z \in \C$ (where $\mu_d$ is the Liouville volume form on $X$), $\mathcal{ W}_\omega$ a random matrix (see Definition \ref{def:random}), and $\Lambda \subset \C$. Then almost surely
\begin{align}
\left ( \frac{2\pi}{N} \right ) ^d \# \set{\Spec (T_N f + N^{-d} \mathcal{W}_\omega ) \cap \Lambda} \xrightarrow{N\to \infty} \mu_d (x \in X : f(x) \in \Lambda).
\end{align}
\end{theorem}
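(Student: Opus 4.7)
The plan is to derive Theorem \ref{thm:weyl} as a direct corollary of Theorem \ref{thm.main}, which provides the almost sure weak convergence of the normalized empirical spectral measure
\[
\mu_N^\omega := \pr{\frac{2\pi}{N}}^d \sum_{\lambda \in \Spec(T_N f + N^{-d}\mathcal{G}_\omega)} \delta_\lambda
\]
to the pushforward $f_*\mu_d$. Since $(2\pi/N)^d \#\set{\Spec(T_N f + N^{-d}\mathcal{G}_\omega) \cap \Lambda} = \mu_N^\omega(\Lambda)$ and $(f_*\mu_d)(\Lambda) = \mu_d(\set{x \in X : f(x) \in \Lambda})$, all that remains is to upgrade weak convergence against continuous test functions to evaluation against the indicator $\indic_\Lambda$.

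I would handle this by a standard portmanteau / sandwich argument: pick continuous compactly supported $\chi_\epsilon^\pm \in C_c(\C)$ with $\indic_{\Lambda_{-\epsilon}} \le \chi_\epsilon^- \le \indic_\Lambda \le \chi_\epsilon^+ \le \indic_{\Lambda_{+\epsilon}}$, where $\Lambda_{\pm\epsilon}$ denote the inner and outer $\epsilon$-thickenings of $\Lambda$. Applying Theorem \ref{thm.main} to each yields, almost surely,
\[
\int \chi_\epsilon^- \, d(f_*\mu_d) \le \liminf_N \mu_N^\omega(\Lambda) \le \limsup_N \mu_N^\omega(\Lambda) \le \int \chi_\epsilon^+ \, d(f_*\mu_d),
\]
and both bounds collapse to $(f_*\mu_d)(\Lambda)$ as $\epsilon \downarrow 0$ provided $(f_*\mu_d)(\Lambda_{+\epsilon} \setminus \Lambda_{-\epsilon}) \to 0$.

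The H\"older-type hypothesis on $f$ is exactly what delivers this boundary control. Substituting $t = r^2$ yields the uniform ball estimate
\[
(f_*\mu_d)(B(z,r)) = \mu_d(\set{x \in X : |f(x) - z| \le r}) = \mathcal{O}(r^{2\kappa})
\]
uniformly in $z \in \C$. Covering the $\epsilon$-neighborhood of $\partial\Lambda$ by balls of radius $\epsilon$ and summing this bound gives $(f_*\mu_d)(\Lambda_{+\epsilon} \setminus \Lambda_{-\epsilon}) \to 0$, and in particular $(f_*\mu_d)(\partial\Lambda) = 0$, whenever the boundary of $\Lambda$ is tame enough relative to $\kappa$ (e.g.\ piecewise Lipschitz when $\kappa > 1/2$, or more generally of finite $(2\kappa)$-dimensional Hausdorff measure).

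The deep content sits entirely inside Theorem \ref{thm.main}, so the main obstacle here is essentially bookkeeping; the only mildly delicate point is that when $\kappa \le 1/2$ the pushforward $f_*\mu_d$ need not be absolutely continuous with respect to planar Lebesgue measure, so one cannot directly invoke $\mathrm{Leb}(\partial\Lambda) = 0$. The uniform ball estimate above sidesteps this by measuring boundaries at the $(2\kappa)$-Hausdorff scale dictated by $\kappa$, which is precisely why the hypothesis on $f$ is formulated in that form.
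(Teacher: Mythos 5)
Your approach is essentially the same as the paper's: the corollary is derived from Theorem \ref{thm.main} by sandwiching $\indic_\Lambda$ between test functions that transition across an $\e$-collar of $\partial\Lambda$, applying the almost-sure weak convergence to each, and letting $\e\to 0$. The paper does exactly this with smooth $\phi \le \indic_\Lambda \le \psi$ and asserts that each of $\int\phi\,d(f_*\mu_d)$ and $\int\psi\,d(f_*\mu_d)$ equals $\mu_d(f\in\Lambda) + \mathcal{O}(\e^\kappa)$.

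Two remarks. First, you do not handle unbounded $\Lambda$: no compactly supported $\chi_\e^+$ can dominate $\indic_\Lambda$ in that case. The paper fixes this by intersecting $\Lambda$ with a bounded open neighborhood $\tilde\Lambda$ of $f(X)$ and invoking Claim \ref{claim:4.2} (the spectrum is a.s.\ eventually contained in $\tilde\Lambda$), so that replacing $\Lambda$ by $\Lambda\cap\tilde\Lambda$ changes neither side of the limit almost surely; your write-up should include this reduction. Second, your scrutiny of the boundary term is actually a genuine improvement over the paper, which claims the $\mathcal{O}(\e^\kappa)$ error without justification; a covering of a piecewise-Lipschitz $\partial\Lambda$ by $\mathcal{O}(1/\e)$ balls of radius $\e$ and the Frostman estimate $(f_*\mu_d)(B(z,r)) = \mathcal{O}(r^{2\kappa})$ gives only $\mathcal{O}(\e^{2\kappa-1})$, which is useful only for $\kappa>1/2$. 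You are right that what is really needed (with no rate) is $(f_*\mu_d)(\partial\Lambda)=0$, and that the Frostman bound supplies it whenever $\partial\Lambda$ is negligible at the $2\kappa$-Hausdorff scale. One small correction there: finite $H^{2\kappa}(\partial\Lambda)$ is not enough (the estimate $(f_*\mu_d) \lesssim H^{2\kappa}$ then only gives finiteness, not vanishing, e.g.\ $H^1$ restricted to a line for $\kappa=1/2$); you need $H^{2\kappa}(\partial\Lambda)=0$, or some equivalent hypothesis on $\Lambda$. That such a hypothesis is needed at all is an implicit assumption in the theorem's statement that both you and the paper leave partly tacit, but you flag it while the paper does not.
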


Finer results are expected for describing the spectrum of randomly perturbed Toeplitz operators. In \citep{Vogel}, precise statements about the number of eigenvalues are obtained using counting functions of holomorphic functions. Here we only show weak convergence of the empirical measures, but achieve this in a relatively simple way using logarithmic potentials as presented in \citep{Sjo-vogel}.

Here we present numerical examples to motivate the main result of this paper. Consider the K\"{a}hler manifold $\C \P^1$ (complex protective space of dimension $1$) which can be identified with the real $2$-sphere with coordinates $(x_1,x_2,x_3)$. In Figure \ref{fig.2}, we compute the spectrum of the quantization of the function $f = x_1 + 2x_2^2+ i x_2$. Before perturbation, the spectrum lies on several lines in the complex plane, somewhat analogous to the Scottish flag operator. However, as a perturbation is added, the spectrum fills in. This paper describes the structure of the spectrum of this perturbed operator in the semiclassical limit, as $N \to \infty$.

\begin{figure}[h!]
\centering
\includegraphics[width = \textwidth]{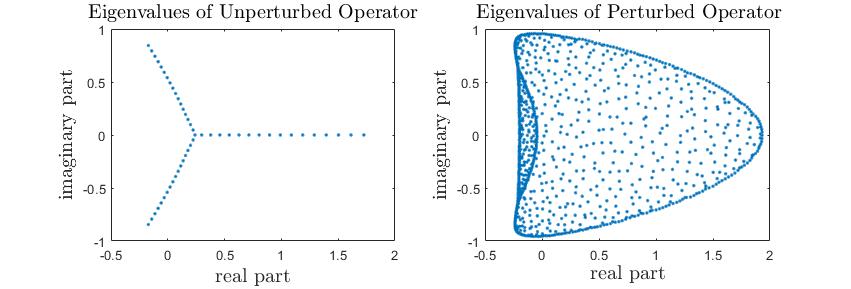}
\caption{Left: Eigenvalues of the Toeplitz operator on $\C \P^1$ identified with the real $2-$sphere with symbol $x_1 + 2 x_1^2 + ix_2$ and $N = 50$. Right: Eigenvalues of the same operator but with a small random perturbation and $N = 1000$.\label{fig.2}}
\end{figure}

Numerical verification of this paper's result can be seen if $f = i x_1 +x_2$ (still on $\C \P^1$). Figure \ref{fig.3} computes the spectrum of $T_Nf $ with a random perturbation added, and plots the number of eigenvalues in circles of increasing radii versus the predicted number of such eigenvalues by Theorem \ref{thm:weyl}. More animations can be found on my \href{https://math.berkeley.edu/~izak/research/toeplitz/movies.html}{website}\footnote{ \url{https://math.berkeley.edu/~izak/research/toeplitz/movies.html}}.

\begin{figure}[h!]
\includegraphics[width = \textwidth]{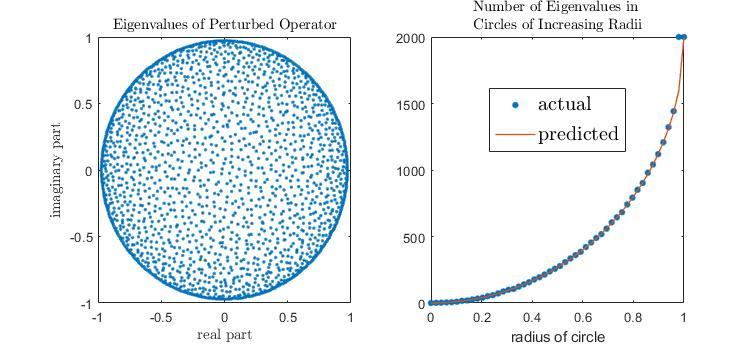}
\caption{Left: Eigenvalues of the randomly perturbed Toeplitz operator on $\C \P^1$ identified with the real $2-$sphere with symbol $ix_1 + x_2$ an $N = 2000$. Right: The number of eigenvalues within circles in the complex plane centered at zero with radii ranging from $0$ to $1$, plotted against the predicted distribution of eigenvalues from Theorem \ref{thm:weyl}.\label{fig.3}}
\end{figure}

\smallsection{Outline of Paper} Section \ref{section:setup} reviews background material and states the main result of this paper (Theorem \ref{thm.main}). In Section \ref{results}, a series of preliminary results about Toeplitz operators are presented. Section \ref{log potential} reviews logarithmic potentials and reduces Theorem \ref{thm.main} to proving a probabilistic bound involving logarithmic derivatives of Toeplitz operators. Section \ref{grushin approach} sets up a Grushin problem to further reduce the problem to prove probabilistic bounds on spectral properties of self-adjoint operators. Section \ref{section b1} proves a deterministic bound involving the logarithmic derivative of Toeplitz operators. The technique involves scaling the symbol by a power of $N$, and therefore relies on the exotic calculus presented in Section \ref{results}. Finally, Section \ref{section:last} chooses constants to establish the required probabilistic bound for the almost sure convergence in Theorem \ref{thm.main}. In Section \ref{section:general random perturbations}, we describe how to extend this result to the more general random perturbations as stated in Theorem \ref{theorem:general perturbations}.

\smallsection{Notation} We will use the following notation in this paper for functions $f$ and $g$ depending on $N$. We write $f = \mathcal{O}(g)$ if there exists $C> 0$ independent of $N$ such that $|f| \le C g$. We write $f = \mathcal{O}(N^{-\infty})$ if for every $M \in \N$, $f = \mathcal{O}(N^{-M})$. Any subscript in the big-O will denote dependence of $C$ of what is in the subscript. We will write $f \lesssim g$ if there exists a $C>0$ independent of $N$ such that $f \le C g$. We write $f\ll g$ to mean that $C f \le g$ for some sufficiently large $C> 0 $ independent of $N$. For a $u,v,w$ elements of a Hilbert space, denote $u\otimes v$ the map that sends $w$ to $u \ip{w}{v}$.
 
\section{Main Result}\label{section:setup}

Let $(X,\sigma)$ be a compact, connected, $d-$dimensional K\"{a}hler manifold with a holomorphic line bundle $L$ with positively curved Hermitian metric locally given by $h = e^{-\phi}$. That is over each fiber $x\in X$, $\norm{v}_{h} := e^{-\phi(x)} |v|$. Given this, the globally defined symplectic form, $\sigma$, is related to the Hermitian metric by $i\p \dbar \phi = \sigma$. Fixing local trivializations, $\phi$ can be described as a strictly plurisubharmonic smooth real-valued function (called the K\"{a}hler potential). This is further outlined by Le Floch in \citep{Floch}.

Let $L^N$ be the $N$th tensor power of $L$, which has Hermitian metric $h_N := e^{-N\phi}$. Let $\mu_d = \sigma ^{\land d} / d!$ be the Liouville volume form on $X$. This provides an $L^2$ structure on sections of $L^N$. Indeed, if $u$ and $v$ are smooth sections on $L^N$, then define
\begin{align}
\ip{u}{v} _{L^N} := \int_X h_N (u,v) \dd\mu_d.
\end{align}
Define $L^2 (X,L^N)$ to be the space of smooth sections of $L^N$ with finite $L^2$ norm. In this $L^2$ space, let $H^0(X,L^N)$ be the space of holomorphic sections. 
\begin{prop} \label{prop:dimension}
The dimension of $H^ 0(X,L^N)$ is finite, and is asymptotically
\begin{align}
\left (\frac{N}{ 2\pi} \right )^d \vol(X) + \mathcal{O} (N^{d-1}) .
\end{align}
\end{prop}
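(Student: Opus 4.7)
The plan is to realize $\dim H^0(X,L^N)$ as the trace of the Bergman projector and apply the standard on-diagonal Bergman kernel asymptotics.

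First I would establish finiteness. Since $X$ is compact and $L^N$ is a holomorphic line bundle, $H^0(X,L^N)$ may be identified with the kernel of $\dbar$ acting on smooth sections of $L^N$, and $\dbar^*\dbar$ (with respect to the $L^2$ structure defined above) is a non-negative self-adjoint elliptic operator on a compact manifold. Elliptic regularity, or equivalently Hodge theory, gives $\dim H^0(X,L^N) < \infty$ and shows that $H^0(X,L^N)$ is a closed subspace of $L^2(X,L^N)$.

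Next I would introduce the Bergman projector $\Pi_N : L^2(X,L^N) \to H^0(X,L^N)$, the orthogonal projection. It is a finite-rank, hence trace-class, operator, with a smooth Schwartz kernel $B_N(x,y)$ taking values in $L^N_x \otimes (L^N_y)^*$. Using an orthonormal basis of $H^0(X,L^N)$ one computes, in the standard way,
\begin{align}
\dim H^0(X,L^N) = \Tr(\Pi_N) = \int_X |B_N(x,x)|_{h_N} \dd\mu_d(x),
\end{align}
where $|B_N(x,x)|_{h_N}$ denotes the scalar obtained from the on-diagonal kernel using the metric $h_N = e^{-N\phi}$. I would then invoke the Catlin--Zelditch expansion for the Bergman kernel on a positively curved Hermitian line bundle (as presented in \citep{Floch}): uniformly in $x\in X$,
\begin{align}
|B_N(x,x)|_{h_N} = \left(\frac{N}{2\pi}\right)^d + \mathcal{O}(N^{d-1}).
\end{align}
Integrating against $\dd\mu_d$ and using $\vol(X) = \int_X \dd\mu_d$ yields the claimed asymptotic.

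The only non-trivial input is the uniform pointwise Bergman expansion with an $\mathcal{O}(N^{d-1})$ remainder, which is the main obstacle in the sense that everything else is bookkeeping. A purely topological alternative would be to use Kodaira vanishing to conclude $H^q(X,L^N)=0$ for $q\ge 1$ and $N$ large, so that $\dim H^0(X,L^N)=\chi(X,L^N)$, and then apply Hirzebruch--Riemann--Roch to recover the same leading term from $\int_X c_1(L)^d/d! = \vol(X)/(2\pi)^d$; however, the Bergman kernel route fits better with the semiclassical framework used in the rest of the paper.
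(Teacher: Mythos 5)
Your proof is correct, but it takes a different route than the paper does. The paper's proof of Proposition~\ref{prop:dimension} is a one-line citation to \citep[Corollary 2]{Charles}; it does not carry out the Bergman-kernel computation at all. Your argument --- finiteness via ellipticity of $\dbar$, then $\dim H^0(X,L^N) = \Tr(\Pi_N) = \int_X B_N(x,x)\,\dd\mu_d$, then the on-diagonal Tian--Yau--Catlin--Zelditch expansion $B_N(x,x) = (N/2\pi)^d + \mathcal{O}(N^{d-1})$ integrated against $\mu_d$ --- is a standard and valid derivation, and the normalization checks out against the conventions $h_N = e^{-N\phi}$, $i\p\dbar\phi = \sigma$, $\mu_d = \sigma^{\wedge d}/d!$ used in the paper. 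It is worth noting that the paper itself flags this exact route as an ``alternative way of proving'' the dimension formula in the remark following the Trace Formula (Proposition~\ref{thm.trace}): taking $f=1$ there gives $\Tr(\Pi_N) = (N/2\pi)^d \vol(X) + \mathcal{O}(N^{d-1})$, which is precisely your computation. So the trade-off is: the paper's citation is shorter and does not create any logical dependence between Proposition~\ref{prop:dimension} and the Toeplitz calculus developed later, whereas your proof is more self-contained within the semiclassical framework but implicitly relies on the same machinery (Bergman kernel asymptotics) that underlies Proposition~\ref{thm.trace}. Your Kodaira-vanishing/Hirzebruch--Riemann--Roch alternative is also correct and is closer in spirit to what a reference like \citep{Charles} would use; it has the advantage of giving the exact answer for large $N$ as a polynomial in $N$, not just a two-term asymptotic.
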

\begin{proof}
See \citep[Corollary 2]{Charles}.
\end{proof}
For the remainder of this paper, denote $\dim (H^0 ,(X,L^N))$ by $\mathcal{ N} = \mathcal{N}(N)$. The orthogonal projection from $L^2(X,L^N)$ to $H^0(X,L^N)$ is called the Bergman projector and is denoted by $\Pi_N$. Finally, given $f\in C^\infty (X;\C)$, the Toeplitz operators associated to $f$, written $T_{N}f$, are defined for each $N \in \N$ as $T_{N}f (u) = \Pi_N(fu)$, where $u\in H^0 (X,L^N)$. In this way, $T_{N}f$ are finite rank operators mapping $H^0 (X,L^N)$ to itself. For the remainder of this paper, we will fix a basis for $H^0 (X,L^N)$ so that $T_Nf$ (and similar operators) can be considered as matrices.

The class of functions to quantize will often depend on $N$. To define this symbol class requires local control of functions. Fix a finite atlas of neighborhoods $(U_i , \zeta_i ) _{i \in \mathcal{I}}$ for the K\"{a}hler manifold $X $.
\begin{defi}[$\bm{S(1)}$]
$S(1)$ is the set of all smooth functions $f$ on $X$ taking complex values which can be written asymptotically $f \sim \sum N^{-j} f_j$, where $f_j \in C^\infty (X;\C)$ do not depend on $N$. This tilde means that for all $\alpha \in \N$
\begin{align}
\p^\alpha_x \left (f \circ \zeta_i(x) - \sum _{j=0}^{M} N^{-j} f_j \circ \zeta _i (x) \right ) = \mathcal{O} _\alpha (N^{-j-1}) 
\end{align}
for all $i\in \mathcal{I}$, and all $\alpha\in \N^d$. By Borel's theorem, given any $f_j \in S(1)$ not depending on $N$, there exists $f \in S(1)$ such that $f \sim \sum N^{-j} f_j$.
\end{defi}

If $f \sim \sum N^{-j} f_j$, we call $f_0$ the principal symbol of $f$, which is unique modulo $\mathcal{O} (N^{-1})$.

We next add a random perturbation to these Toeplitz operators. For this we must fix a probability space $\Omega$ with probability measure $\P$.

\begin{defi}[$\bm{\mathcal{G}_\omega}$ \textbf{and} $\bm{\mathcal{W}_\omega}$]\label{def:random}
For each $N$, let $\set{e_i : i = 1, \dots, \mathcal{N}}$ be an orthonormal basis of $H^0(X,L^N)$. Define:
\begin{align}
\mathcal{•G}_\omega = \sum _{i,j = 1}^\mathcal{N} \alpha _{j,k} e_i \otimes e_j : H^0 (X,L^N) \to H^0(X,L^N)
\end{align}
where $\alpha_{j,k}$ are independent identically distributed complex Gaussian random variables with mean zero and variance $1$. 

Similarly define $\mathcal{W}_\omega = \sum _{i,j = 1}^\mathcal{N} \tilde \alpha _{j,k} e_i \otimes e_j$, with $\tilde \alpha_{j,k}$ independent identically distributed copies of a complex random variable with mean zero and bounded second moment.
\end{defi}

The $\omega$ in the subscript of these objects is to emphasize that these objects are random. That is for each $\omega \in \Omega$, $\mathcal{ G}_\omega$ is a finite rank operator. The majority of this article describes perturbations by $\mathcal{G}_\omega$ (the Gaussian case), while a brief note at the end concerns the more general perturbations by $\mathcal{•W}_\omega$.

This paper will prove almost sure weak convergence of the empirical distribution of eigenvalues of randomly perturbed Toeplitz operators. The principal symbol of $f$ must also satisfy the property that there exists $\kappa \in (0,1]$ such that
\begin{align}
\mu_d (\{x \in X : |f_0 (x) - z | ^2 \le t \} ) = \mathcal{O}(t^\kappa)\label{eq:f_0 property}
\end{align}
as $t\to 0$ uniformly for all $z \in \C$. It is observed in \citep{Christiansen} that if $f$ is real analytic, then \eqref{eq:f_0 property} holds. See \citep{Christiansen}, and references presented there, for further discussion of \eqref{eq:f_0 property}.

\begin{theorem}[\textbf{Main Theorem}]\label{thm.main}
Given $f \in S(1)$ which satisfies \eqref{eq:f_0 property} and $\mathcal{ G}_\omega$, a family of random operators on $H^0(X,L^N)$, as defined in Definition \ref{def:random}, then for each $\e > 0$ there exists $\beta = \beta(\e) \in (0,1)$ and $C> 0$ such that if $\delta = \delta (N)$ satisfies
\begin{align}
C e ^{-N^\beta} <  \delta < C  ^{-1 }N^{-d/2 - \e} \label{eq294}
\end{align}
then we have almost sure weak convergence of the empirical measures of $T_N f + \delta \mathcal{G}_\omega$ to $\vol(X)^{-1} (f_0)_* \mu_d$.

More precisely, if $\lambda_i = \lambda_i (N,\omega)$ are the (random) eigenvalues of $T_{N}f + \delta \mathcal{G}_\omega$, then for all $\phi \in C_0^\infty (\C)$ 
\begin{align}
\frac{1}{\mathcal{ N}}\sum_{i=1}^{\mathcal{N}} \phi (\lambda_i )\xrightarrow{N\to \infty} \frac{1}{\vol(X)} \int_{\C} \phi (z) [(f_0 ) _* \mu_d](\dd z) 
\end{align}
almost surely, where $(f_0 ) _* \mu_d$ is the push-forward of the volume form $\mu_d$ on $X$ by $f _0 $.

Moreover, for each $\e>0$, the constant $\beta(\e)$ in \eqref{eq294} can be chosen at most strictly less than
\begin{align}
 \begin{cases}
 2 \epsilon \kappa & \text{if } \epsilon < \frac{1}{2(\kappa + 1)}\\
\frac{\kappa }{\kappa + 1} & \text{if } \epsilon\ge \frac{1}{2(\kappa + 1)}
\end{cases}
\end{align}
where $\kappa$ is defined in \eqref{eq:f_0 property}.
\end{theorem}

We expect Theorem \ref{thm.main} to hold for a much larger class of random perturbations than described in Definition \ref{def:random}. Indeed, the only properties of $\mathcal{ G}_\omega$ we use is a norm bound (Lemma \ref{lemma:norm gaussian matrix}) and an anti-concentration bound (Proposition \ref{thm:f1}). See \cite{Vogel2021} where Vogel and Zeitouni establish similar logarithmic determinant estimates with these classes of random perturbations, and \cite[Remark 1.3]{basak2020} where Basak, Paquette, and Zeitouni describe random perturbations satisfying these properties.

Here we present a version of Theorem \ref{thm.main} for the more general random perturbations $\mathcal{W}_\omega$ as described in Definition \ref{def:random}. 

\begin{theorem}[\textbf{General Perturbations}]\label{theorem:general perturbations}
For $\mathcal{W}_\omega$ defined in Definition \ref{def:random}, $f\in S(1)$ satisfying \eqref{eq:f_0 property}, $\delta = N^{-d}$, then the empirical measures of $T_N f + \delta \mathcal{W}_\omega$ converge almost surely to $(\vol(X) )^{-1} (f_0 )_* \mu _d $.
\end{theorem}

A proof of this result is presented in Section \ref{section:general random perturbations}.

\begin{rem}
We expect a wider range of $\delta$'s and more general random perturbations in Theorem \ref{theorem:general perturbations} should lead to the same conclusion. 
\end{rem}

\section{Review of an Exotic Calculus of Toeplitz Operators} \label{results}

In proving Theorem \ref{thm.main}, non-negative symbols are scaled by powers of $N^{-1}$. These functions belong to a more exotic symbol class than smooth functions uniformly bounded in $N$. Toeplitz operators of functions in this symbol class still have natural composition formulas. A summary of these results is contained in this section. For proofs see \citep{Izak}.

\begin{defi}[\textbf{Order Function}]
For $\rho \in [0,1/2)$, a $\rho$-order function $m$ on $X$ is a function $m\in C^\infty (X; \R_{>0})$, depending on $N$, such that there exists $M_0 \in \N$ such that for all $x,y \in X$:
\begin{align}
m(x) / m(y) \lesssim (1 + \dist (x,y) N^\rho ) ^{M_0}, \label{order function}
\end{align}
where $\dist (x,y)$ is the distance between $x$ and $y$ with respect to the Riemannian metric on $X$ induced by the symplectic form $\sigma$.
\end{defi}

\begin{defi}[$\bm{S_\rho (m)}$]\label{def.symbol class}
Given $\rho \in [0,1/2)$ and a $\rho$-order function $m$ on $X$. $S_\rho (m)$ is defined as the set of smooth functions on $X$ depending on $N$ such that for all $i\in \mathcal{I}$, $\alpha \in \N^d$:
\begin{align}
|\p^\alpha (f \circ \zeta_i ^{-1} (x))| \lesssim_\alpha N^{\delta |\rho|} m \circ \zeta_i^{-1}(x)
\end{align}
for all $x\in \zeta_i (U_i)$ (recall $\set{(U_i,\zeta_i ) : i\in \mathcal{I}}$ is a finite atlas on $X$).
\end{defi}

\begin{prop}[\textbf{Composition}]\label{thm.composition}
Given $\rho \in [0,1/2)$, $\rho$-order functions $m_1,m_2$ on $X$, $f\in S_\rho (m_1)$ and $g \in S_\rho (m_2)$. Then there exists $h \in S_\rho (m_1 m_2)$ such that:
\begin{align}
T_{N}f \circ T_{N}g = T_{N}h + \mathcal{O} (N^{-\infty}),
\end{align}
where $\mathcal{ O}$ is in terms of the norm from $L^2 (X ,L^N ) \to L^2 (X,L^N)$. Moreover, the principal symbol of $h$ is $f_0 g_0$.
\end{prop}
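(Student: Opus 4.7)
The plan is to represent $T_N f \circ T_N g = \Pi_N M_f \Pi_N M_g \Pi_N$ through its Schwartz kernel and exploit the strong near-diagonal localization of the Bergman kernel. Locally one has $\Pi_N(x,y) = N^d e^{N\Psi(x,y)} a(x,y;N) + \mathcal{O}(N^{-\infty})$ with $\Re{\Psi(x,y)} \le -c\,\dist(x,y)^2$, so the kernel concentrates at length scale $N^{-1/2}$. The key quantitative input is that for $\rho < 1/2$, each derivative of a symbol in $S_\rho(m_i)$ costs only a factor $N^\rho$, and set against the natural scale $N^{-1/2}$ this yields a genuine small parameter $N^{\rho - 1/2} \to 0$.

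Concretely, I would start from
\begin{equation*}
K(x,y) = \int_{X^2} \Pi_N(x,z)\, f(z)\, \Pi_N(z,w)\, g(w)\, \Pi_N(w,y)\, \dd\mu_d(z)\,\dd\mu_d(w),
\end{equation*}
insert a partition of unity subordinate to the atlas $\{U_i\}$, and discard the region where $\dist(z,x)$ or $\dist(w,x)$ exceeds $N^{-1/2+\eta}$ for some small $\eta > 0$, at the cost of an $\mathcal{O}(N^{-\infty})$ error coming from the Gaussian off-diagonal decay of $\Pi_N$. In local Bergman coordinates I would rescale by $N^{-1/2}$ and Taylor expand $f$ and $g$ about $x$; the bound $|\p^\alpha f| \lesssim N^{\rho|\alpha|} m_1$ ensures that successive terms in the expansion carry negative powers of the small parameter $N^{1/2 - \rho}$. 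Combined with the known expansion $a(x,y;N) \sim \sum_j N^{-j} a_j(x,y)$ of the amplitude and the reproducing identity $\Pi_N^2 = \Pi_N$, Gaussian integration extracts the leading term $f_0(x) g_0(x)$ and, at each subsequent order, a coefficient $h_k \in S_\rho(m_1 m_2)$.

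I would then define $h$ by Borel summation of the resulting asymptotic series, which remains in $S_\rho(m_1 m_2)$ by the same argument used for $S(1)$. To upgrade the pointwise kernel expansion to an $L^2$ operator-norm estimate, I would apply Schur's test to the remainder kernel: the off-diagonal Gaussian decay of $\Pi_N$, together with the order-function inequality $m_i(x)/m_i(y) \lesssim (1 + \dist(x,y) N^\rho)^{M_0}$, gives uniform $L^1$ control in each variable, since polynomial growth at scale $N^\rho$ is defeated by Gaussian decay at scale $N^{-1/2}$ --- exactly the regime where $\rho < 1/2$ is essential.

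The main obstacle will be making the expansion uniform in the order functions and in the exotic derivative bounds simultaneously. One must show that truncating the Taylor expansion at order $M$ yields a remainder kernel whose $L^1$-row and $L^1$-column norms gain a factor like $N^{-M(1/2 - \rho)}$ uniformly in $x$, so that taking $M \sim \log N$ converts this into a genuine $\mathcal{O}(N^{-\infty})$ estimate rather than a merely polynomial gain. A secondary technical point is managing the two intermediate integrations cleanly: most likely one first reduces $\Pi_N M_g \Pi_N$ to a one-step kernel expansion and then composes with $\Pi_N M_f \Pi_N$, iterating the same stationary-phase machinery rather than attacking the double integral directly.
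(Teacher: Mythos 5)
The paper does not prove this proposition itself --- it defers entirely to the companion paper \citep{Izak} --- so there is no in-text proof to compare against. Your outline is nonetheless the natural and, as far as I can tell, the expected argument: write the kernel of $\Pi_N M_f \Pi_N M_g \Pi_N$, exploit the near-diagonal Gaussian localization of $\Pi_N$ at scale $N^{-1/2}$, Taylor-expand the symbols, and observe that the exotic derivative loss $N^{\rho|\alpha|}$ is strictly dominated because $\rho < 1/2$, so each successive Taylor term gains a power of $N^{\rho - 1/2}$. Your identification of the order-function inequality against Gaussian off-diagonal decay as the mechanism controlling the Schur test is exactly the right bookkeeping.

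Two points worth tightening. First, in the region you keep, $\dist \lesssim N^{-1/2+\eta}$, the order-function factor $(1 + N^\rho\dist)^{M_0}$ is already $\mathcal{O}(1)$ since $\rho < 1/2$; the Gaussian-versus-polynomial competition only arises in the tail you discard, and there it is an exponential-versus-polynomial mismatch, which is automatic. Second, you do not need to let the truncation order depend on $N$: once you Borel-sum the formal series to a single $h \in S_\rho(m_1 m_2)$, the remainder $T_N f \circ T_N g - T_N h$ is $\mathcal{O}(N^{-M(1/2-\rho)})$ for \emph{every} fixed $M$, and that is already $\mathcal{O}(N^{-\infty})$ by definition; choosing $M \sim \log N$ conflates the truncation estimate with the Borel-summation step and would introduce an unnecessary uniformity headache. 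With those adjustments the proposal is sound, and I have no reason to think it diverges in strategy from the proof in \citep{Izak}.
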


\begin{claim}\label{claim:poop}
Given $f\in S(1)$ with $f_0 \ge 0$, then if $\rho \in [0,1/2)$, $m(x) = f_0 N^{2\rho} + 1$ is a $\rho$-order function on $X$ and $f N^{2\rho} \in S_\rho (m)$.
\end{claim}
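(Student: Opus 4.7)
The plan is to reduce the claim to a single classical input, the Landau--Glaeser inequality: for any non-negative $C^2$ function $g$ on a compact manifold, $|\nabla g|^2 \lesssim g$, with the implicit constant depending on $\|g\|_{C^2}$. Applied to $f_0 \ge 0$ this gives $|\nabla f_0| \lesssim \sqrt{f_0}$ pointwise, which is the only genuinely analytic fact driving both halves of the claim; everything else is bookkeeping organized by the order of the derivative $|\alpha|$.

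For the order function property I would set $g = f_0 + N^{-2\rho}$, so that $m = N^{2\rho} g$ and $g \ge N^{-2\rho}$ pointwise. Since $|\nabla g| = |\nabla f_0| \lesssim \sqrt{f_0} \le \sqrt{g}$, the function $\sqrt{g}$ is uniformly Lipschitz on $X$ with a constant independent of $N$. Integrating along a minimizing geodesic gives $\sqrt{g(x)} \le \sqrt{g(y)} + C \dist(x,y)$, and using $\sqrt{g(y)} \ge N^{-\rho}$, dividing by $\sqrt{g(y)}$ and squaring yields $m(x)/m(y) = g(x)/g(y) \le (1 + C N^{\rho}\dist(x,y))^2$, which is precisely \eqref{order function} with $M_0 = 2$.

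For the symbol estimate $f N^{2\rho} \in S_\rho(m)$, I would expand $f \sim \sum N^{-j} f_j$ in each chart and check the bound $|\partial^\alpha (fN^{2\rho})| \lesssim_\alpha N^{\rho|\alpha|} m$ case by case in $|\alpha|$. The case $|\alpha| = 0$ is immediate from $|fN^{2\rho}| \le N^{2\rho} f_0 + O(N^{2\rho-1}) \lesssim m$ using $m \ge 1$. For $|\alpha| = 1$ the Landau--Glaeser bound yields $N^{2\rho}|\partial f_0| \lesssim N^{2\rho}\sqrt{f_0} = N^\rho \sqrt{N^{2\rho} f_0} \le N^\rho \sqrt{m} \le N^\rho m$, and the subleading terms contribute $O(N^{2\rho-1}) \ll N^\rho m$ because $2\rho < 1$. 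For $|\alpha| \ge 2$ the derivative $|\partial^\alpha f_0|$ is simply uniformly bounded on the compact $X$, so $N^{2\rho}|\partial^\alpha f_0| \lesssim N^{2\rho} \le N^{\rho|\alpha|} \le N^{\rho|\alpha|} m$ trivially, with an identical estimate for the lower-order terms.

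The main obstacle, such as it is, is recognizing that non-negativity of the principal symbol is precisely what saves the first-derivative estimate: without the Landau--Glaeser inequality one would only have $N^{2\rho}|\partial f_0| \lesssim N^{2\rho}$, which overshoots the target $N^\rho m$ by a factor of $N^\rho$ on the zero set of $f_0$, and the whole construction would collapse. Once that single observation is made, the claim is a routine verification.
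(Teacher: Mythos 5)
The paper defers all proofs in Section~\ref{results} to \citep{Izak}, so there is no in-paper proof of Claim~\ref{claim:poop} to compare against; I can only assess your argument on its own terms. It is correct: the Glaeser/Malgrange inequality $|\nabla f_0|^2 \lesssim f_0$ for a nonnegative $C^2$ function on a compact manifold is precisely the standard device for placing rescaled nonnegative symbols into exotic classes, the resulting uniform Lipschitz bound on $\sqrt{f_0 + N^{-2\rho}}$ gives the order-function estimate \eqref{order function} with $M_0 = 2$, and your case split on $|\alpha|$ --- $|\alpha|=0$ using $m\ge 1$, $|\alpha|=1$ using $N^{2\rho}\sqrt{f_0} = N^\rho\sqrt{N^{2\rho}f_0}\le N^\rho m$, $|\alpha|\ge 2$ using boundedness of higher derivatives on the compact $X$, plus $N^{2\rho-j}\le N^{\rho|\alpha|}m$ for the subleading $f_j$ with $j\ge 1$ --- closes the symbol estimate for $fN^{2\rho}\in S_\rho(m)$.
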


\begin{prop}[\textbf{Parametrix Construction}]\label{thm.parametrix construction}
Given $\rho\in [0,1/2)$, a $\rho$-order function $m$ on $X$, $\rho \in [0,1/2)$, and $f\in S_\rho (m)$ such that there exists $C>0$ so that $f> Cm$. Then there exists $g\in S_\rho (m^{-1})$ such that:
\begin{align}
T_{N} f \circ T_{N}g &=1 + \mathcal{O }(N^{-\infty}), && T_{N}g \circ T_{N} f = 1 + \mathcal{O }(N^{-\infty}).
\end{align}
\end{prop}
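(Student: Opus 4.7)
The plan is a parametrix construction by Neumann iteration in the exotic symbol calculus. The leading ansatz is $g_0 := 1/f$. Since $f \ge Cm$ and $f \in S_\rho(m)$, the identity $f\cdot g_0 = 1$ together with Leibniz gives inductively $|\partial^\alpha (g_0 \circ \zeta_i^{-1})| \lesssim_\alpha N^{\rho|\alpha|} m^{-1}\circ\zeta_i^{-1}$, using that $m^{-1}$ is itself a $\rho$-order function (the defining inequality for $m$ is symmetric in $x,y$). Hence $g_0 \in S_\rho(m^{-1})$.

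Applying Proposition \ref{thm.composition} gives $T_Nf \circ T_Ng_0 = T_N h_0 + \mathcal{O}(N^{-\infty})$ with $h_0 \in S_\rho(1)$ of principal symbol $f g_0 = 1$, so $h_0 = 1 + r_1$ with $r_1$ having vanishing principal symbol. The composition formula from \citep{Izak} is quantitative: each successive term of its expansion contributes a factor $N^{-1}$ against two derivatives of the symbols, each costing $N^\rho$, so the first nontrivial term lies in $S_\rho(N^{-(1-2\rho)})$, a decaying class since $\rho < 1/2$. Let $\#$ denote the symbolic product from Proposition \ref{thm.composition}, so $T_Na \circ T_Nb = T_N(a\# b) + \mathcal{O}(N^{-\infty})$. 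Then $r_1^{\#k} \in S_\rho(N^{-k(1-2\rho)})$, so the Neumann series
\begin{align}
g \sim g_0 \,\#\, \sum_{k\ge 0} (-1)^k r_1^{\#k}
\end{align}
is asymptotic in the small parameter $N^{-(1-2\rho)}$ with $k$th term in $S_\rho(N^{-k(1-2\rho)} m^{-1})$. Borel summation in $S_\rho(m^{-1})$ (also from \citep{Izak}, analogous to the Borel step for $S(1)$ recorded in the definition of that class) produces a genuine symbol $g \in S_\rho(m^{-1})$ realizing this sum, and by construction $T_Nf \circ T_Ng = 1 + \mathcal{O}(N^{-\infty})$.

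The same argument applied on the opposite side produces $\tilde g \in S_\rho(m^{-1})$ with $T_N\tilde g \circ T_Nf = 1 + \mathcal{O}(N^{-\infty})$. Associativity of operator composition then yields
\begin{align}
T_N\tilde g = T_N\tilde g \circ (T_Nf \circ T_Ng) + \mathcal{O}(N^{-\infty}) = (T_N\tilde g \circ T_Nf)\circ T_Ng + \mathcal{O}(N^{-\infty}) = T_Ng + \mathcal{O}(N^{-\infty}),
\end{align}
so $g$ is a two-sided parametrix, completing the proof. The main obstacle is not the algebraic Neumann scheme itself but rather verifying the two quantitative inputs from \citep{Izak}: that the composition remainder after the principal term lies in $S_\rho(N^{-(1-2\rho)} m_1 m_2)$ (a strict sharpening of the principal-symbol statement recorded in Proposition \ref{thm.composition}), and that Borel resummation remains available in the exotic class $S_\rho(m^{-1})$. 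Once these refinements are in hand, the iteration closes because $1 - 2\rho > 0$.
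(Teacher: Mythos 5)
The paper does not actually prove this proposition in the text: Section~\ref{results} explicitly states that proofs of all the calculus statements, including this parametrix construction, are deferred to \citep{Izak}. So there is no in-paper proof to compare against. That said, your Neumann-iteration parametrix argument is the standard route for such a statement and is sound: you correctly observe that $m^{-1}$ is again a $\rho$-order function (the defining ratio bound is symmetric in $x,y$), that $1/f \in S_\rho(m^{-1})$ via Leibniz and $f\gtrsim m$, and that the first correction $r_1 = h_0 - 1$ should lie in $S_\rho(N^{-(1-2\rho)})$, making the Neumann series asymptotic since $1-2\rho>0$, so Borel resummation closes the iteration; the left/right-parametrix reconciliation at the end is the usual algebraic step. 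You are also right that the only genuine content is the quantitative form of the composition remainder and the availability of Borel summation in the exotic class $S_\rho(m^{-1})$ --- but those are exactly the technical tools \citep{Izak} is cited to supply, so flagging them as external inputs rather than reproving them is the appropriate level of rigor here, not a gap.
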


\begin{prop}[\textbf{Functional Calculus}]\label{thm.functional calc}
Given a $\rho$-order function $m \ge 1$ on $X$ (for a fixed $\rho \in [0,1/2)$), a family of operators $\set{R_N}_{N\in \N}$ mapping $H^0 (X,L^N)$ to itself such that $\norm{R_N}_{ } = \mathcal{O} (N^{-\infty})$ and $T_{N}f + R_N$ is self-adjoint for all $N$, and $f\in S_\rho (m)$ taking real non-negative values such that there exists $C> 0 $ with $|f| \ge m C^{-1} - C$. Then for any $\chi \in C^\infty (\R ; \C)$, there exists $g\in S_\rho (m^{-1} ) $ such that
\begin{align}
\chi(T_{N}f + R_N) = T_{N}g + \mathcal{O}(N^{-\infty})
\end{align} 
and $g$ has principal symbol $\chi(f_0 ) $. 
\end{prop}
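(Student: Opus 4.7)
The plan is to invoke the Helffer-Sj\"ostrand functional calculus. First I would fix an almost analytic extension $\tilde\chi \in C^\infty(\C;\C)$ of $\chi$ (using the standard Mather construction, possibly times a cutoff), so that $\tilde\chi|_\R = \chi$, $\supp\tilde\chi$ lies in a thin strip around $\R$, and $\bar\partial\tilde\chi(z) = \mathcal{O}_k(|\Im z|^k)$ for every $k\ge 0$. Writing $A := T_N f + R_N$, which is self-adjoint by hypothesis, the Helffer-Sj\"ostrand formula gives
\begin{align}
\chi(A) = -\frac{1}{\pi}\int_\C \bar\partial\tilde\chi(z)\,(z - A)^{-1}\,dL(z),
\end{align}
so the task reduces to approximating, for each $z \in \C\setminus\R$, the resolvent $(z-A)^{-1}$ by a Toeplitz operator $T_N g_z$ with $g_z \in S_\rho(m^{-1})$ whose seminorms and parametrix error depend on $z$ at worst polynomially in $|\Im z|^{-1}$.

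For fixed $z \in \C\setminus\R$ I would build $g_z$ by iterating the composition formula of Proposition \ref{thm.composition}, starting from the leading term $g_z^{(0)} := 1/(z-f)$. Because $f$ is real non-negative with $f \ge C^{-1} m - C$, and $|z - f(x)|^2 \ge (\Im z)^2 + (\Re z - f(x))^2$, one obtains a pointwise bound of the form $|z - f(x)| \gtrsim |\Im z|(1 + m(x))/(1 + |z|)$, placing $g_z^{(0)}$ in $S_\rho(m^{-1})$ with seminorms polynomial in $|\Im z|^{-1}$. Applying Proposition \ref{thm.composition} to $T_N(z-f)\circ T_N g_z^{(0)}$ produces $1 + T_N r_z^{(1)}$ with $r_z^{(1)}$ gaining a factor of $N^{-1+2\rho}$; iterating these corrections and Borel-summing would yield $g_z \in S_\rho(m^{-1})$ with $(z - A)\circ T_N g_z = 1 + \mathcal{O}(N^{-\infty})$. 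Setting
\begin{align}
g(x) := -\frac{1}{\pi}\int_\C \bar\partial\tilde\chi(z)\,g_z(x)\,dL(z),
\end{align}
the infinite-order vanishing of $\bar\partial\tilde\chi$ on $\R$ absorbs the polynomial blow-ups, so the integral converges, $g \in S_\rho(m^{-1})$, and $\chi(A) = T_N g + \mathcal{O}(N^{-\infty})$. The principal symbol is extracted from the leading term via the scalar identity $\chi(f_0(x)) = -\frac{1}{\pi}\int_\C \bar\partial\tilde\chi(z)\,(z - f_0(x))^{-1}\,dL(z)$.

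The hard part will be maintaining uniform-in-$z$ control throughout the parametrix iteration: at every step the symbol estimates and the $\mathcal{O}(N^{-\infty})$ remainder accumulate negative powers of $|\Im z|$ from differentiating $(z-f)^{-1}$, and I would need to check that these powers stay polynomial in the iteration index (rather than, say, exponential) so that the infinite-order vanishing of $\bar\partial\tilde\chi$ renders the $z$-integral absolutely convergent and the resulting $g$ lies in $S_\rho(m^{-1})$. Once this bookkeeping is in place, the remainder consists of direct applications of Propositions \ref{thm.composition} and \ref{thm.parametrix construction} together with a standard Borel summation.
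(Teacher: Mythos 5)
The paper itself does not contain a proof of this proposition; it is stated with the citation ``For proofs see \citep{Izak}'', so there is no in-paper argument to compare against directly. That said, the route you propose --- Helffer--Sj\"ostrand plus a parametrix for $(z-A)^{-1}$ in the exotic class --- is the standard and essentially the only natural way to prove a smooth functional calculus in a symbol calculus with composition and parametrix theorems already in hand, and it is the approach one should expect \citep{Izak} to take. Your outline is sound. The pointwise ellipticity estimate $|z-f(x)|\gtrsim |\Im z|\,m(x)/(1+|z|)$ does follow from $f\ge C^{-1}m-C$, $f\ge 0$, $m\ge 1$ (split on whether $f\lesssim 1+|z|$ or $f\gg 1+|z|$), and Fa\`a di Bruno then places $(z-f)^{-1}\in S_\rho(m^{-1})$ with $S_\rho(m^{-1})$-seminorm of order $\alpha$ bounded by a constant times $\bigl((1+|z|)/|\Im z|\bigr)^{1+|\alpha|}$, i.e.\ polynomial in $|\Im z|^{-1}$ with degree growing linearly in $|\alpha|$. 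That linear (not exponential) growth is exactly what you flag as the ``hard part,'' and it is the right thing to watch: the composition remainders of Proposition~\ref{thm.composition} add a bounded number of derivatives per step, so each iterate $r_z^{(k)}$ costs only a fixed additional power of $|\Im z|^{-1}$ on top of the $N^{-k(1-2\rho)}$ gain, and the $\mathcal{O}_k(|\Im z|^k)$ vanishing of $\bar\partial\tilde\chi$ then absorbs everything upon integration. Two small points worth recording: (i) since $T_N(z-f)=z-A+R_N$, you should carry the $R_N T_N g_z^{(0)}$ term along in the parametrix identity; it contributes $\mathcal{O}(N^{-\infty})$ times a quantity polynomial in $|\Im z|^{-1}$, which the $\bar\partial\tilde\chi$ weight again kills. (ii) The statement as written allows arbitrary $\chi\in C^\infty(\RR;\CC)$, but for the conclusion $g\in S_\rho(m^{-1})$ to make sense and for your almost-analytic extension to have compact (or thin-strip, rapidly decaying) support, one implicitly needs $\chi$ bounded with some decay (e.g.\ $\chi\in C_0^\infty$ or Schwartz); this is consistent with how the proposition is applied in Section~\ref{section b1}, where $\chi$ is compactly supported.
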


Typically, Proposition \ref{thm.functional calc} will be applied with $R_N = 0$ for all $N$.

\begin{prop}[\textbf{Trace Formula}]\label{thm.trace}
If $m$ is a $\rho$-order function on $X$ (for fixed $\rho\in[0,1/2)$), and $f\in S_\rho (m)$, then
\begin{align}
\Tr T_{N}f & = \left ( \frac{N}{2\pi} \right )^d\int _X f(x) \dd \mu_d(x)+ \mathcal{O} (N^{d - (1-2\rho)} ) \max_{x \in X} m(x) \\
&= \left ( \frac{N}{ 2\pi} \right )^d\int _X f_0(x) \dd \mu_d(x)+ \mathcal{O} (N^{d - (1-2\rho)} ) \max_{x \in X} m(x),
\end{align}
where $f_0$ is the principal symbol of $f$.
\end{prop}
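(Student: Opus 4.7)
The plan is to express $\Tr T_N f$ as an integral of $f$ against the on-diagonal Bergman kernel and then invoke the classical leading-order asymptotic for that kernel. First I would fix an $L^2$-orthonormal basis $\{e_i\}_{i=1}^{\mathcal{N}}$ of $H^0(X, L^N)$. Since $\Pi_N$ is self-adjoint and acts as the identity on $H^0(X,L^N)$,
\begin{align}
\Tr T_N f = \sum_{i=1}^{\mathcal{N}} \ip{\Pi_N(f e_i)}{e_i}_{L^N} = \sum_{i=1}^{\mathcal{N}} \int_X f(x)\, h_N(e_i(x), e_i(x)) \, \dd\mu_d(x) = \int_X f(x) B_N(x,x) \, \dd\mu_d(x),
\end{align}
where $B_N(x,x) := \sum_i h_N(e_i(x), e_i(x))$ is the restriction of the Bergman kernel to the diagonal, a scalar function on $X$ independent of the chosen basis.

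Next I would invoke the classical uniform on-diagonal asymptotic for the Bergman kernel (Tian--Catlin--Zelditch; this is the same input that underlies Proposition \ref{prop:dimension}), namely
\begin{align}
B_N(x,x) = \left(\frac{N}{2\pi}\right)^d + \mathcal{O}(N^{d-1})
\end{align}
uniformly for $x \in X$. Substituting into the trace identity and bounding $|f(x)| \lesssim m(x) \le \max_X m$, which follows from Definition \ref{def.symbol class} at $|\alpha| = 0$, gives
\begin{align}
\Tr T_N f = \left(\frac{N}{2\pi}\right)^d \int_X f \, \dd\mu_d + \mathcal{O}(N^{d-1}) \max_X m,
\end{align}
which is in fact strictly sharper than the stated remainder, since $N^{d-1} \le N^{d-(1-2\rho)}$ whenever $\rho \ge 0$. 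This establishes the first equality.

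For the second equality, I would use that if $f$ admits $f_0$ as a principal symbol in the natural sense ($f - f_0 = \mathcal{O}(N^{-1}) m$ pointwise), then $(N/2\pi)^d \int_X (f - f_0) \, \dd\mu_d = \mathcal{O}(N^{d-1}) \max_X m$, which is absorbed into the stated remainder. The main (and fairly mild) obstacle is pinpointing the Bergman-kernel expansion in exactly the form needed --- a uniform pointwise remainder $\mathcal{O}(N^{d-1})$ on the diagonal; this is standard in the Tian--Catlin--Zelditch line of results and is already implicit in Proposition \ref{prop:dimension}, since integrating $B_N(x,x) = (N/2\pi)^d + \mathcal{O}(N^{d-1})$ against $\dd\mu_d$ recovers exactly that proposition. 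A conceptual subtlety is that $S_\rho(m)$ as defined does not carry an intrinsic asymptotic expansion, so the second equality implicitly presupposes a principal symbol in the sense used in applications such as Claim \ref{claim:poop}, where $f = N^{2\rho} g$ with $g \in S(1)$.
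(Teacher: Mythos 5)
The paper does not supply its own proof here; Proposition \ref{thm.trace} is imported verbatim from \citep{Izak} (``For proofs see \citep{Izak}''), so there is no in-paper argument to compare against. Your proposal, however, is the natural and essentially correct proof, and the structure is sound.

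Your exact identity $\Tr T_N f = \int_X f(x) B_N(x,x)\,\dd\mu_d(x)$ is correct (using $\Pi_N = \Pi_N^*$ and $\Pi_N e_i = e_i$), and the on-diagonal Tian--Catlin--Zelditch expansion $B_N(x,x) = (N/2\pi)^d + \mathcal{O}(N^{d-1})$, uniform on the compact $X$, together with $|f|\lesssim m \le \max_X m$ from Definition \ref{def.symbol class} at $\alpha = 0$, indeed yields the first line with remainder $\mathcal{O}(N^{d-1})\max_X m$. As you observe, this is \emph{sharper} than the stated $\mathcal{O}(N^{d-(1-2\rho)})\max_X m$; the weaker exponent in the statement appears to be tailored to the applications in Section \ref{section b1}, where only $N^{d-(1-2\rho)}$ is needed, and may also reflect the way the remainder is produced in \citep{Izak} (likely through the operator-level calculus rather than the direct kernel identity).

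The one place your argument is slightly off is the second equality. For symbols produced by the $S_\rho(m)$ calculus (composition, parametrix, functional calculus), the natural asymptotic expansion proceeds in powers of $N^{-(1-2\rho)}$, not $N^{-1}$: each term gains $N^{-1}$ but costs two derivatives at $N^{\rho}$ each. So the correct pointwise estimate is $f - f_0 = \mathcal{O}(N^{-(1-2\rho)})\,m$, giving $(N/2\pi)^d\int_X (f-f_0)\,\dd\mu_d = \mathcal{O}(N^{d-(1-2\rho)})\max_X m$. This is in fact precisely why the stated remainder has the exponent $d-(1-2\rho)$; the second equality is not absorbed into an $\mathcal{O}(N^{d-1})$ error as you claimed, but is the step that forces the weaker bound. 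You do flag, correctly, that the paper never defines a principal symbol for $S_\rho(m)$, so the second line implicitly presupposes an expansion in the sense just described (as arises, e.g., in Claim \ref{claim:poop} and Proposition \ref{thm.functional calc}). With that exponent corrected, the proposal is a valid and arguably cleaner proof than one going through the operator calculus.
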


Note that if $f = 1$, then $\Tr T_{N}1 = \Tr (\Pi_N ) = \dim(H^0(X,L^N)) = \mathcal{N}$ which is an alternative way of proving that $\mathcal{N} = \vol(X)(N/2\pi)^d + \mathcal{O} (N^{d-1})$.

\section{Probabilistic Preliminaries} \label{log potential}
This paper uses the probabilistic machinery of logarithmic potentials. A brief overview is presented in this section.

\begin{defi}[$\bm{\mathcal{ P}(\C)}$]
Let $\mathcal{ P} (\C)$ be the collection of probability measures $\mu $ on $\C$ such that $\int \log (1 + |z| ) \dd \mu(z) < \infty$.
\end{defi}

\begin{defi}[\textbf{Logarithmic Potential}]
For $\nu \in \mathcal{P} (\C)$, define the logarithmic potential as: $U_\nu(z) : = \int _\C \log | z - w | \dd \nu (w)$.
\end{defi}

Using the fact that $\log|z|$ is the fundamental solution of the Laplacian, it can be shown that, in the sense of distributions, $\Delta U_\nu = 2\pi \nu$, which is the key ingredient in proving the following theorem.

\begin{prop}[\textbf{Convergence of Random Measures by Logarithmic Potentials}]\label{thm:log_potential}
Given $\set{\nu_N}\subset \mathcal{P}(\C)$ random measures such that almost surely $\supp \nu_N \subset \Lambda$ for $N \gg 1$ (with $\Lambda \Subset \bar \Lambda \Subset \Lambda' \Subset \C$) and for almost all $z\in \Lambda'$: $U_{\nu_N } (z) \to U_\nu (z)$ almost surely for some $\nu \in \mathcal{P} (C)$ with $\supp \nu \subset \Lambda$. Then almost surely $\nu_N \to \nu$ weakly.
\end{prop}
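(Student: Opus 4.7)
The plan is to reduce almost sure weak convergence $\nu_N \to \nu$ to almost sure $L^1$-convergence of the logarithmic potentials on $\Lambda'$. For any $\phi \in C_c^\infty(\C)$, the distributional identity $\Delta U_\mu = 2\pi \mu$ (combined with Fubini applied to $\log|z-w|$) yields the exact formula
\begin{align}
\int_\C \phi(w)\, \dd \mu(w) \;=\; \frac{1}{2\pi} \int_\C U_\mu(z)\, \Delta \phi(z)\, \dd A(z).
\end{align}
Because $\supp \nu_N,\ \supp \nu \subset \bar \Lambda$ almost surely, multiplying any $\phi \in C_b(\C)$ by a smooth cutoff equal to $1$ on $\bar \Lambda$ and compactly supported in $\Lambda'$ changes neither $\int \phi\, \dd \nu_N$ nor $\int \phi\, \dd \nu$, so it suffices to prove that almost surely $U_{\nu_N} \to U_\nu$ in $L^1(\Lambda')$.

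First I would convert the hypothesis into almost sure a.e.\ convergence. The assumption provides, for Lebesgue-a.e.\ $z \in \Lambda'$, a $\P$-full event on which $U_{\nu_N}(z) \to U_\nu(z)$; applying Fubini to the indicator of the joint set $\{(z,\omega) \in \Lambda' \times \Omega : U_{\nu_N(\omega)}(z) \to U_\nu(z)\}$ produces a single $\P$-full event $\Omega_0$ on which $U_{\nu_N(\omega)}(z) \to U_\nu(z)$ for Lebesgue-a.e.\ $z \in \Lambda'$.

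The main technical step is to verify Vitali's uniform-integrability condition for the family $\{U_{\nu_N}\}$ on $\Lambda'$, uniformly in $N$ and in $\omega \in \Omega_0$. Setting $R := \sup_{w \in \bar \Lambda}|w|$, the pointwise bound $\log|z-w| \le \log(|z|+R)$ gives a uniform upper bound on $U_{\nu_N}^+$ over $\Lambda'$. For the negative parts, Fubini gives
\begin{align}
\int_E U_{\nu_N}^-(z)\, \dd A(z) \;\le\; \sup_{w \in \bar \Lambda} \int_E \bigl(\log|z-w|\bigr)^-\, \dd A(z)
\end{align}
for every Borel $E \subset \Lambda'$, and the right-hand side tends to $0$ as $|E|\to 0$ uniformly in $w$, because the logarithmic singularity is locally integrable in the plane. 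Taking $E = \Lambda'$ also yields a uniform $L^1$-bound on $U_{\nu_N}^-$, so both hypotheses of Vitali are met.

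Combining the pieces, Vitali's theorem upgrades the pointwise a.e.\ convergence on $\Omega_0$ to $L^1(\Lambda')$ convergence, and the potential identity then gives $\int \phi\, \dd \nu_N \to \int \phi\, \dd \nu$ almost surely for every $\phi \in C_c^\infty(\C)$. Applying this simultaneously to a countable dense family of test functions on a single full-probability event, and invoking the cutoff reduction, yields almost sure weak convergence against all of $C_b(\C)$. The only real obstacle is the uniform-integrability step; once that short Fubini argument is in place, the rest is routine.
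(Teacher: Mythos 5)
Your proof is correct. Note, however, that the paper does not prove this proposition at all --- it simply cites Theorem 7.1 of \citep{Sjo-vogel} --- so there is nothing in the paper to compare against line by line; your argument supplies a self-contained derivation that the paper outsources. Your route is the standard potential-theoretic one: use $\Delta U_\mu = 2\pi\mu$ (justified by Fubini and local integrability of $\log$, since $\nu_N$ and $\nu$ are compactly supported) to reduce weak convergence against $C_c^\infty$ test functions to $L^1(\Lambda')$-convergence of the potentials; convert ``for a.e.\ $z$, a.s.\ convergence'' into ``a.s., for a.e.\ $z$, convergence'' by Fubini on the product space; and upgrade pointwise a.e.\ convergence to $L^1(\Lambda')$-convergence by Vitali. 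The uniform-integrability check is the only nontrivial step, and your Fubini plus translation argument for the negative parts (reducing to uniform absolute continuity of $\int_E(\log|u|)^-\,\dd A(u)$ over sets $E$ of small measure, which is independent of the translation $w\in\bar\Lambda$) is exactly right; the positive parts are handled by the trivial upper bound since $|z-w|$ is uniformly bounded on $\Lambda'\times\bar\Lambda$. The concluding countable-dense-family step is also standard given that all the measures involved are probability measures with supports eventually contained in the fixed compact set $\bar\Lambda$. Two points worth stating explicitly rather than leaving implicit: you should restrict to the almost-sure event on which $\supp\nu_N\subset\Lambda$ holds for all large $N$ before running the uniform-integrability estimate (since the negative-part bound uses $\supp\nu_N\subset\bar\Lambda$), and you should record that $(z,\omega)\mapsto U_{\nu_N(\omega)}(z)$ is jointly measurable before invoking Fubini; both are routine here, but the Fubini step silently assumes the second.
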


\begin{proof}
See \citep[Theorem 7.1]{Sjo-vogel}.
\end{proof}

We wish to use Proposition \ref{thm:log_potential} to prove almost sure weak convergence of the empirical measures of $T_Nf + \delta \mathcal{G}_\omega$.

\begin{defi}[$\bm{\nu_N}$]
Let $\sigma_N$ be the spectrum of $T_{N}f + \delta \mathcal{G}_\omega$. Let $\nu_N = \mathcal{N}^{-1} \sum _{\lambda \in \sigma_N} \hat \delta _\lambda$ where $\delta >0$ depends on $N$, and $\hat \delta_\lambda$ is the Dirac distribution centered at $\lambda$. The logarithmic potentials for these random measures are
\begin{align}
U_{\nu_N} (z) = \frac{1}{\mathcal{N}} \sum _{\lambda \in \sigma_N} \log |z - \lambda| = \frac{1}{\mathcal{N}} \log |\det (T_{N}f + \delta \mathcal{G}_\omega - z) |.
\end{align}
\end{defi}

\begin{defi}[$\bm{\nu}$]
Let $\nu = \vol(X)^{-1} (f_0 ) _* \mu_d$ (recall $\mu_d$ is the volume measure on $X$) which has logarithmic potential
\begin{align}
U_\nu (z) = \strokedint _{X} \log |z - f_0 (x ) | \dd \mu_d(x).
\end{align}
\end{defi}
Where $\strokedint_X f \dd \mu_d$ is defined as $\vol(X)^{-1} \int f\dd \mu_d $. 

\begin{claim} 
For all $N$, $\nu_N,\nu\in \mathcal{P}(\C)$.
\end{claim}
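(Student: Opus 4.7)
The plan is to verify directly the two defining properties of $\mathcal{P}(\C)$ for each of $\nu$ and $\nu_N$: namely that they are probability measures on $\C$, and that $\int \log(1+|z|)\,d\mu(z) < \infty$.

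For $\nu = \vol(X)^{-1} (f_0)_* \mu_d$, the total mass is immediate: $\nu(\C) = \vol(X)^{-1}\mu_d(f_0^{-1}(\C)) = \vol(X)^{-1}\mu_d(X) = 1$. For the logarithmic moment, I would apply the change of variables formula for push-forwards to get
\begin{align}
\int_\C \log(1+|z|)\,d\nu(z) = \frac{1}{\vol(X)}\int_X \log(1+|f_0(x)|)\,d\mu_d(x).
\end{align}
Since $X$ is compact and $f_0 \in C^\infty(X;\C)$, the function $|f_0|$ attains a finite maximum, so the right-hand side is bounded by $\log(1 + \max_X |f_0|) < \infty$.

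For $\nu_N$, the key observation is that $T_N f + \delta \mathcal{G}_\omega$ acts on the finite-dimensional space $H^0(X,L^N)$, which by Proposition \ref{prop:dimension} has finite dimension $\mathcal{N}$. Interpreting $\sigma_N$ as the multiset of eigenvalues counted with algebraic multiplicity, $\nu_N$ is a convex combination of $\mathcal{N}$ Dirac masses and so has total mass one. Every eigenvalue is a finite complex number (as a root of the characteristic polynomial of a finite matrix), so
\begin{align}
\int_\C \log(1+|z|)\,d\nu_N(z) = \frac{1}{\mathcal{N}}\sum_{\lambda \in \sigma_N} \log(1+|\lambda|)
\end{align}
is a finite sum of finite quantities, hence finite for every $\omega \in \Omega$ and every $N$.

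There is no real obstacle in this claim; it is a sanity check justifying the applicability of the logarithmic potential machinery to $\nu_N$ and $\nu$. The only mild subtlety is being explicit that $\sigma_N$ is counted with multiplicity so that $\nu_N$ has mass one, and that the finite-rank nature of $T_N f + \delta \mathcal{G}_\omega$ on $H^0(X,L^N)$ rules out any unbounded spectrum.
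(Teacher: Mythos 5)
Your proof is correct and matches the paper's approach; the paper also bounds the integral for $\nu$ by $\log(1+\max_X |f_0|)$ via the push-forward, and for $\nu_N$ it bounds the finite sum by $\log(1+\norm{T_Nf+\delta\mathcal{G}_\omega})$, which is a slight refinement of your ``finite sum of finite terms'' observation but proves the same thing. The extra care you take to note that $\nu$ and $\nu_N$ have total mass one is a reasonable addition that the paper leaves implicit.
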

\begin{proof}
For each $N\in \N$
\begin{align}
\int _\C \log (1 + |z| ) \dd \nu_N(z) & = \frac{1}{\mathcal{ N}} \sum _{\lambda \in \sigma_N} \log (1 + |\lambda| ) \\
&\le \max _{\lambda \in \sigma _N } \log (1 + |\lambda| ) \\
&\le \log (1 + \norm{T_{N}f + \delta \mathcal{G}_\omega}_{} ) < \infty.
\end{align}
And similarly,
\begin{align}
\int_\C \log (1 + |z| ) \dd \nu(z) &= \frac{1}{\vol(X)} \int _\C \log (1+ |z| )[ (f _0)_* \mu_d](\dd z) \\
&\le \max_{x\in X} \log (1 + |f(x)| ) < \infty.
\end{align}
\end{proof}

Let $\Lambda$ be a neighborhood of $f(X)$. Clearly $\supp \nu \subset \Lambda$, the same is true with probability $1$ for $\nu_N$, for sufficiently large $N$. A standard random matrix lemma is required to show this.

\begin{lemma}[\textbf{Norm of Gaussian Matrix}]\label{lemma:norm gaussian matrix}
There exists $C >0$ such that
\begin{align}
\P(\norm{\mathcal{G}_\omega}_{  } \le C \mathcal{N}^{1/2} )\ge 1 - \exp(- \mathcal{N}).
\end{align}
If an event has this lower bound of probability, it is said to occur with overwhelming probability.
\end{lemma}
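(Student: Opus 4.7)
The plan is a standard $\epsilon$-net plus union bound argument for controlling the operator norm of a Gaussian matrix. Thinking of $\mathcal{G}_\omega$ as an $\mathcal{N}\times\mathcal{N}$ matrix acting on $\C^{\mathcal{N}}$ (in the fixed basis), the complex unit sphere $S \subset \C^{\mathcal{N}}$ has real dimension $2\mathcal{N}-1$, so by a standard volume argument it admits a $1/2$-net $\Sigma \subset S$ with $|\Sigma| \le 5^{2\mathcal{N}}$. The usual net-to-norm comparison then gives
\begin{align}
\|\mathcal{G}_\omega\|_{\mathrm{op}} \;\le\; 4 \sup_{u,v \in \Sigma} |\langle u, \mathcal{G}_\omega v \rangle|.
\end{align}

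Next, I would fix unit vectors $u,v \in \C^{\mathcal{N}}$ and analyze the sesquilinear form $\langle u, \mathcal{G}_\omega v\rangle = \sum_{i,j} \bar u_i (\mathcal{G}_\omega)_{ij} v_j$. Since the entries $(\mathcal{G}_\omega)_{ij}$ are i.i.d.\ standard complex Gaussians, this sum is itself a complex Gaussian with variance $\sum_{i,j}|u_i|^2 |v_j|^2 = \|u\|^2 \|v\|^2 = 1$. Consequently there is a universal constant with
\begin{align}
\P(|\langle u,\mathcal{G}_\omega v\rangle| \ge r) \le 2 e^{-r^2/2}.
\end{align}
A union bound over the $|\Sigma|^2 \le 25^{2\mathcal{N}}$ pairs gives
\begin{align}
\P(\|\mathcal{G}_\omega\|_{\mathrm{op}} > 4r) \le 2 \cdot 25^{2\mathcal{N}} \cdot e^{-r^2/2}.
\end{align}
Choosing $r = C_0 \mathcal{N}^{1/2}$ with $C_0$ large enough that $C_0^2/2 - 4\log 25 \ge 2$, and then setting $C = 4C_0$, yields $\P(\|\mathcal{G}_\omega\|_{\mathrm{op}} > C\mathcal{N}^{1/2}) \le e^{-\mathcal{N}}$ as claimed (after absorbing the constant factor by enlarging $C_0$ slightly, using that $\mathcal{N} \to \infty$ with $N$, or simply noting that the bound needs to hold for all $N$ and adjusting $C$ to handle the finitely many small $N$ by hand).

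The main (minor) obstacle is only bookkeeping: one must be careful about the complex vs.\ real variance convention in Definition \ref{def:random} when computing the Gaussian tail, and one must verify that the $1/2$-net cardinality $5^{2\mathcal{N}}$ indeed refers to real dimension $2\mathcal{N}$ of $S \subset \C^{\mathcal{N}}$. No genuinely new ideas are required—this is a textbook concentration estimate for the operator norm of a Gaussian matrix, and the paper itself appeals to it as a standard fact.
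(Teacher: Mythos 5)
Your proposal is correct and is essentially the same argument the paper invokes: the paper's proof is just a citation to Tao's Exercise 2.3.3, which is precisely this $\epsilon$-net plus union-bound concentration estimate, and you have filled in the standard details. One small arithmetic slip: since $|\Sigma|^2 \le 25^{2\mathcal{N}} = e^{2\mathcal{N}\log 25}$, the condition on $C_0$ should read $C_0^2/2 - 2\log 25 \ge 1 + \log 2$ (or similar), not $4\log 25$, but your stated condition is only more stringent, so the conclusion is unaffected.
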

\begin{proof}
See \citep[Exercise 2.3.3]{Tao}.
\end{proof}
For a fixed $\e> 0$, we will choose $\delta = \delta (N)$ such that 
\begin{align}
0 < \delta = \mathcal{O} (\mathcal{N}^{-1/2 - \e}). \label{eq:choice of delta}
\end{align}

\begin{lemma}[\textbf{Borel--Cantelli}]\label{lemma:Borel-Cantelli}
If $A_n$ are events such that $\sum_1^\infty \P (A_n) < \infty$, then the probability that $A_n$ occurs infinitely often is $0$. 
\end{lemma}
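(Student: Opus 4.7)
The plan is to express the event ``$A_n$ occurs infinitely often'' as a $\limsup$ of events and then apply countable subadditivity together with the fact that the tail of a convergent series vanishes. This is entirely standard, and the proof will essentially be a reference to a probability textbook, but I will sketch the four-line argument.

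First I would write the target event explicitly as
\begin{align}
A := \limsup_{n\to\infty} A_n = \bigcap_{N=1}^\infty \bigcup_{n=N}^\infty A_n,
\end{align}
which is by definition the event that $\omega$ lies in $A_n$ for infinitely many $n$. Next, for each fixed $N$, the inclusion $A \subset \bigcup_{n=N}^\infty A_n$ together with monotonicity and countable subadditivity of $\P$ gives
\begin{align}
\P(A) \le \P\Bigl(\bigcup_{n=N}^\infty A_n\Bigr) \le \sum_{n=N}^\infty \P(A_n).
\end{align}

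Finally, the hypothesis $\sum_{n=1}^\infty \P(A_n) < \infty$ implies that the right-hand side tends to $0$ as $N \to \infty$, so letting $N \to \infty$ yields $\P(A) = 0$. The only step requiring any thought is the rewriting of ``infinitely often'' as the $\limsup$ of events, but this is purely a definitional unpacking; there is no genuine obstacle here, since the result is a direct consequence of $\sigma$-subadditivity of $\P$.
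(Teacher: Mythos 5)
Your proof is correct and is the standard argument for the first Borel--Cantelli lemma; the paper itself simply cites \citep{Durrett} for this fact, and the argument you sketch (writing the event as $\limsup_n A_n$, applying monotonicity and $\sigma$-subadditivity, and letting the tail of the convergent series go to zero) is precisely the textbook proof being referenced.
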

\begin{proof}
See \citep{Durrett}.
\end{proof}
\begin{lemma}[\textbf{Bound of $\bm{T_{N}f}$}]\label{lemma:crude TNf bound}
Given $f\in S(1)$, then $\norm{T_{N}f}_{L^N \to L^N} \le \sup |f|$.
\end{lemma}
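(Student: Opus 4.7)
The plan is to write $T_N f$ as $\Pi_N M_f \Pi_N$, where $M_f$ denotes pointwise multiplication by $f$ on sections, and then estimate each factor. Since $f \in S(1)$ and $X$ is compact, $\sup_X |f|$ is finite (for $N$ large, $f$ differs from the bounded smooth function $f_0$ by $\mathcal{O}(N^{-1})$), so the statement has content.

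First I would observe that $\Pi_N : L^2(X,L^N) \to H^0(X,L^N)$ is an orthogonal projector with respect to the inner product $\ip{\cdot}{\cdot}_{L^N}$ defined in Section \ref{section:setup}, so $\|\Pi_N\|_{L^2 \to L^2} \le 1$. Next, for the multiplication operator, I would compute pointwise: for any $u \in L^2(X, L^N)$,
\begin{align}
\|M_f u\|_{L^N}^2 = \int_X h_N(fu, fu)\, \dd \mu_d = \int_X |f(x)|^2\, h_N(u,u)\, \dd \mu_d \le (\sup_X |f|)^2 \|u\|_{L^N}^2,
\end{align}
using that the Hermitian metric $h_N$ is sesquilinear so that the scalar factor $f(x)$ pulls out as $|f(x)|^2$.

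Combining the two bounds for $u \in H^0(X, L^N)$:
\begin{align}
\|T_N f\, u\|_{L^N} = \|\Pi_N M_f u\|_{L^N} \le \|M_f u\|_{L^N} \le \sup_X |f| \cdot \|u\|_{L^N},
\end{align}
which gives the claimed operator norm bound. There is no real obstacle here; the only point requiring any care is the pointwise factoring of $|f|^2$ out of the fiber metric $h_N$, which is immediate from the definition of a Hermitian metric. The bound is not sharp in general (it does not exploit the restriction to holomorphic sections), but it suffices for the applications in the sequel.
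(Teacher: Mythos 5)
Your proof is correct and takes essentially the same approach as the paper: write $T_N f = \Pi_N M_f \Pi_N$ and bound each factor, with $\|\Pi_N\| \le 1$ because $\Pi_N$ is an orthogonal projection and $\|M_f\| \le \sup|f|$ by a pointwise estimate in the fiber metric. (The paper's one-line proof says ``$\Pi_N$ is unitary,'' which is a slip --- it is an orthogonal projection, hence a contraction --- and your version states this correctly and fills in the multiplication-operator bound that the paper leaves implicit.)
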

\begin{proof}
This follows immediately by writing $T_N f = \Pi_N \circ M_f \circ \Pi_N$ and recalling that $\Pi_N$ is unitary.
\end{proof}

\begin{claim}\label{claim:4.2}
Almost surely, $\supp \nu_N \subset \Lambda$ for $N \gg 1$.
\end{claim}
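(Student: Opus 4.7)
The plan is to reduce almost sure spectral containment to a deterministic resolvent estimate for $T_Nf-z$ outside $\Lambda$, combined with an almost sure operator norm bound on $\delta \mathcal{G}_\omega$. I would proceed in four steps.

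\textbf{Step 1 (almost sure smallness of the perturbation).} Lemma \ref{lemma:norm gaussian matrix} gives $\P(\norm{\mathcal{G}_\omega} > C\mathcal{N}^{1/2}) \le e^{-\mathcal{N}}$. Since $\mathcal{N} \asymp N^d$ by Proposition \ref{prop:dimension}, these probabilities are summable in $N$, so Borel--Cantelli (Lemma \ref{lemma:Borel-Cantelli}) implies that almost surely $\norm{\mathcal{G}_\omega} \le C\mathcal{N}^{1/2}$ for all $N$ sufficiently large. With the choice $\delta = \mathcal{O}(\mathcal{N}^{-1/2-\e})$ from \eqref{eq:choice of delta}, this yields $\norm{\delta \mathcal{G}_\omega} = \mathcal{O}(\mathcal{N}^{-\e}) \to 0$ a.s.

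\textbf{Step 2 (a priori localization).} By Lemma \ref{lemma:crude TNf bound}, $\norm{T_Nf} \le \sup_X |f|$. Combined with Step 1, almost surely $\sigma_N$ is contained in a fixed disk $D\subset \C$ for $N$ large. It therefore suffices to show that on the compact set $K := \overline{D}\setminus \Lambda$, which lies at some positive distance $c > 0$ from $f_0(X)$, the perturbed operator $T_Nf + \delta \mathcal{G}_\omega - z$ is invertible for every $z \in K$ and every $N$ sufficiently large.

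\textbf{Step 3 (uniform resolvent bound for $T_Nf - z$).} For $z \in K$, I apply the composition formula (Proposition \ref{thm.composition}) to $T_N \overline{(f-z)} \circ T_N (f-z)$ to obtain $T_N h_z + \mathcal{O}(N^{-\infty})$, where $h_z \in S(1)$ has principal symbol $|f_0-z|^2 \ge c^2 > 0$. For $N$ large, $h_z \ge c^2/2$ on all of $X$, uniformly in $z \in K$. The parametrix construction (Proposition \ref{thm.parametrix construction}), applied with $\rho = 0$ and $m = 1$, then produces $g_z \in S(1)$ with $T_N g_z \circ T_N h_z = 1 + \mathcal{O}(N^{-\infty})$. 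Composing, $T_N(f-z)$ acquires a left inverse of uniformly bounded operator norm; since $H^0(X,L^N)$ is finite-dimensional this is a two-sided inverse, and we obtain $\norm{(T_Nf-z)^{-1}} \le C'$ uniformly in $z \in K$ and in $N \gg 1$.

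\textbf{Step 4 (Neumann series conclusion).} Combining Steps 1 and 3, almost surely for $N$ large we have $\norm{\delta \mathcal{G}_\omega}\cdot C' < 1/2$. Writing $T_Nf + \delta \mathcal{G}_\omega - z = (T_Nf-z)\bigl(I + (T_Nf-z)^{-1}\delta \mathcal{G}_\omega\bigr)$ and using a Neumann series inverts the right-hand factor, hence $z \notin \sigma_N$ for every $z \in K$. This gives $\sigma_N \subset \Lambda$ for $N \gg 1$, as desired.

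\textbf{Main obstacle.} The delicate point is the uniformity in $z \in K$ of the parametrix bound in Step 3: Proposition \ref{thm.parametrix construction} is stated for a single symbol, so one must verify that the construction depends only on finitely many seminorms of $h_z$ and on the lower bound $c^2/2$, and that the former are continuous in $z$ on the compact set $K$. This is standard in symbol calculus but is the only place where a genuine check is needed; the probabilistic ingredients are then immediate.
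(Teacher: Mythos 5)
Your proof is correct in its structure and would succeed, but it takes a substantially more elaborate route than the paper does, and the extra machinery is not needed for what Claim \ref{claim:4.2} actually requires. The paper's proof is three lines: by Lemma \ref{lemma:norm gaussian matrix}, \eqref{eq:choice of delta}, and Lemma \ref{lemma:crude TNf bound}, with overwhelming probability $\norm{T_Nf + \delta\mathcal{G}_\omega} \le \sup|f| + N^{-\e}$, so all eigenvalues lie in a disk of radius $\sup|f| + o(1)$; taking $\Lambda$ to contain such a disk (which is compatible with the requirements of Proposition \ref{thm:log_potential}, where $\Lambda$ need only be a bounded set containing $\supp\nu = f_0(X)$), the event $\sigma_N \subset \Lambda$ has overwhelming probability, and Borel--Cantelli (Lemma \ref{lemma:Borel-Cantelli}) finishes. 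In other words, the paper localizes the spectrum into a large disk, which is enough, whereas you localize it into an arbitrarily small neighborhood of $f_0(X)$.

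What your approach buys is a genuinely sharper statement --- spectral concentration near the image of the principal symbol --- by constructing a uniform parametrix for $T_Nf - z$ on $K = \overline{D}\setminus\Lambda$ via Propositions \ref{thm.composition} and \ref{thm.parametrix construction}. This is a true quasi-mode/pseudospectrum estimate and is interesting in its own right, but it imports the exotic calculus into a step where the paper deliberately avoids it. The price, which you correctly flag as the ``main obstacle,'' is the uniformity in $z\in K$ of the parametrix: Proposition \ref{thm.parametrix construction} is stated for a single symbol, and one would need to open its proof in \citep{Izak} to check that the $\mathcal{O}(N^{-\infty})$ remainder and the seminorms of $g_z$ depend only on finitely many seminorms of $h_z$ and the lower bound $c^2/2$, then use continuity of $z\mapsto h_z$ on the compact $K$. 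This is standard but is real work; for the purposes of this claim the crude norm bound of Lemma \ref{lemma:crude TNf bound} sidesteps it entirely. One small additional remark: in Step 3, the left inverse you build is only a left inverse modulo $\mathcal{O}(N^{-\infty})$, so you should note that for $N\gg1$ this still forces injectivity of $T_Nf - z$ (if $(T_Nf-z)u = 0$ then $(1+K)u=0$ with $\norm{K}<1$, impossible), and finite-dimensionality then upgrades the approximate left inverse to an exact two-sided one with the norm bound $\norm{(T_Nf-z)^{-1}} \le 2\norm{T_Ng_z T_N\overline{(f-z)}}$.
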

\begin{proof}
First note that $\norm{T_N f + \delta \mathcal{G} _\omega}_{ } \le \norm{T_N f}_ { } + \delta \norm{\mathcal{ G}_\omega}_{ } \le \sup f + \mathcal{N}^{-\e}$ with overwhelming probability (by Lemma \ref{lemma:norm gaussian matrix}, \eqref{eq:choice of delta}, and Lemma \ref{lemma:crude TNf bound}). Let $\sigma_N$ be the spectrum of $T_N f + \delta \mathcal{G} _\omega$. In this event, for sufficiently large $N$, $\sigma_N \subset \Lambda$. So if $A_N^c$ is the event that $\sigma _N \subset \Lambda$, then $\P (A_N^c ) \ge 1 - e^{-\mathcal{N}}$. Therefore $\sum \P (A_N) < \infty$ and so by Lemma \ref{lemma:Borel-Cantelli}, almost surely $P(A_N^c ) = 1$ for $N \gg 1$.

\end{proof}

\begin{lemma}[\textbf{Almost Sure Convergence}]\label{lemma:ASC}
If $\set{Y_N} _{N\in \N}$ and $Y$ are random variables on a probability space $(\Omega, \P)$ and $\e_N$ is a sequence of numbers converging to $0$ such that
\begin{align}
\sum _{N=1}^\infty \P(|Y_N - Y| > \e_N ) < \infty,
\end{align}
then $Y_N \to Y$ almost surely.
\end{lemma}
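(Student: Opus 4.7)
The plan is to deduce this directly from the Borel--Cantelli lemma (Lemma \ref{lemma:Borel-Cantelli}) already established in the paper. Define the events $A_N := \{|Y_N - Y| > \epsilon_N\}$. By hypothesis $\sum_N \mathbb{P}(A_N) < \infty$, so Borel--Cantelli gives $\mathbb{P}(A_N \text{ i.o.}) = 0$. Equivalently, on a full-measure event $\Omega_0 \subset \Omega$, only finitely many $A_N$ occur.

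Next I would translate this into pointwise convergence. Fix $\omega \in \Omega_0$. Then there exists $N_0 = N_0(\omega)$ such that $|Y_N(\omega) - Y(\omega)| \le \epsilon_N$ for every $N \ge N_0$. Given any $\eta > 0$, since $\epsilon_N \to 0$ by hypothesis, choose $N_1 \ge N_0$ with $\epsilon_N < \eta$ for all $N \ge N_1$. Then $|Y_N(\omega) - Y(\omega)| < \eta$ for $N \ge N_1$, which is exactly the definition of $Y_N(\omega) \to Y(\omega)$. Since this holds for every $\omega \in \Omega_0$ and $\mathbb{P}(\Omega_0) = 1$, we conclude $Y_N \to Y$ almost surely.

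There is no real obstacle here: the lemma is a standard packaging of Borel--Cantelli with a vanishing threshold, and the only subtlety worth flagging is the order of quantifiers — one must first fix $\omega$ in the full-measure set to obtain the threshold $N_0(\omega)$, and only afterwards exploit $\epsilon_N \to 0$ to upgrade the bound $|Y_N - Y| \le \epsilon_N$ into genuine convergence.
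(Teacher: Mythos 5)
Your proof is correct and is the standard argument; the paper itself simply cites Durrett for this lemma, and what you have written is exactly the canonical derivation one finds there (and it sensibly reuses the paper's Lemma \ref{lemma:Borel-Cantelli}). The quantifier order you flag — fix $\omega$ first to get $N_0(\omega)$, then invoke $\e_N\to 0$ — is indeed the only place to be careful, and you handle it correctly.
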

\begin{proof}
See \citep{Durrett}.
\end{proof}

Therefore $\nu_N$ and $\nu$ satisfy the conditions of Proposition \ref{thm:log_potential}. So it suffices to show that $U_{\nu_N} (z) \to U_\nu (z)$ for almost all $z$ in the bounded set containing $\Lambda$. To prove this almost sure convergence, it suffices to apply Lemma \ref{lemma:ASC} with $Y_N= \mathcal{N}^{-1} \log |\det (T_{N}f+ \delta \mathcal{G}_\omega - z) |$ and $Y = \strokedint \log |z - f_0 (x ) | \dd \mu_d(x)$ for suitably chosen $\e_N$.

\section{Setting up a Grushin Problem} \label{grushin approach}
To control $\log |\det (T_N f + \delta \mathcal{G}_\omega - z)|$ we follow the now standard method of setting up a Grushin problem. This approach was used in \citep{Vogel} and \citep{Hager}, and is comprehensively reviewed in \citep{SjoZwo}.

Let $P = T_{N}f$ and $\mathcal{H}_N = H^0(X,L^N)$. Define the $z$-dependent self-adjoint operators $Q = (P-z)^* (P-z)$ and $\tilde Q = (P-z)(P-z)^*$. These operators share the same eigenvalues $0 \le t_1^2 \le \cdots \le t_{\mathcal{ N}}^2$. We can find an orthonormal basis of eigenvectors of $Q$ for these eigenvalues, denoted by $e_i$, and similarly, and orthonormal basis of eigenvectors of $\tilde Q$ denoted by $f_i$. These eigenvectors can be chosen such that
\begin{align}
(P-z)^* f_i = t_i e_i, && (P-z)e_i = t_i f_i, && i = 1, \dots, \ \mathcal{N}.
\end{align}
Next we fix $\rho \in (0,\min(1/2, \e))$, and define:
\begin{align}
\alpha := N^{-2\rho}, && A: = \max \set{i\in \Z : t_i^2 \le \alpha}. \label{eq:alpha definition}
\end{align}

\begin{defi}[$\bm{\mathcal{ P}^\delta}$]
Let $\delta_j$ be the standard basis of $\C^A$, and define the operators $R_+(z) =\sum _1^A \delta_i \otimes e_i : \mathcal{H}_N \to \C^A $ and $R_- (z)= \sum_1^A f_i \otimes \delta _i : \C^A \to \mathcal{H}_N$, where we use	 the notation $(u\otimes v ) (w) = \ip{w}{v}u$. For each $z\in \C$ and $\delta \ge 0$, define
\begin{align}
\mathcal{P}^\delta(z) := \mat{P + \delta \mathcal{G}_\omega - z & R_-(z) \\ R_+(z) & 0 } : \mat{ \mathcal{H}_N \\ \C^A} \to \mat{ \mathcal{H}_N \\ \C^A} .\label{eq:scriptP}
\end{align}
\end{defi}

\begin{lemma}
If $\delta = 0$, then $\mathcal{P}^\delta$, as defined in \eqref{eq:scriptP}, is bijective with inverse \begin{align}
\mathcal{E} ^0 (z) = \mat{\sum _{A +1}^{\mathcal{ N}} \frac{1}{t_i} e_i \otimes f_i && \sum_1^A e_i \otimes \delta _i\\
\sum_1^A \delta _i \otimes f_i && - \sum_1^A t_i \delta _i \otimes \delta _i } : = \mat{ E^0(z) & E_+^0(z) \\ E^0_- (z) & E_{-+}^0(z)}. \label{eq:scriptE}
\end{align}

\end{lemma}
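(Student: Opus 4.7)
The plan is a direct block-by-block verification that the stated $\mathcal{E}^0(z)$ is indeed the two-sided inverse of $\mathcal{P}^0(z)$. Since $\mathcal{H}_N \oplus \C^A$ is finite-dimensional, one-sided invertibility would already suffice, but the computation is symmetric enough that one may just check both $\mathcal{P}^0 \mathcal{E}^0 = I$ and $\mathcal{E}^0 \mathcal{P}^0 = I$. The only ingredients needed are the singular-value relations $(P-z) e_i = t_i f_i$ and $(P-z)^* f_i = t_i e_i$, the orthonormality of $\{e_i\}_{i=1}^{\mathcal{N}}$ and $\{f_i\}_{i=1}^{\mathcal{N}}$, and the tensor-product convention $(u\otimes v)(w) = \ip{w}{v} u$.

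Expanding $\mathcal{P}^0 \mathcal{E}^0$ produces four block entries. For the $(2,2)$ entry, $R_+ E_+^0 \delta_k = R_+ e_k = \delta_k$ for $k\le A$ by orthonormality of the first $A$ of the $e_i$'s, so it is $I_{\C^A}$. For the $(2,1)$ entry, $E^0$ maps into the span of $\{e_i : i > A\}$, which is orthogonal to the range of $R_+^*$, so $R_+ E^0 = 0$. For the $(1,2)$ entry, applying $(P-z) E_+^0 + R_- E_{-+}^0$ to $\delta_k$ with $k\le A$ gives $t_k f_k - t_k f_k = 0$. For the $(1,1)$ entry, $(P-z) E^0 w = \sum_{i> A} \ip{w}{f_i} f_i$ by the relation $(P-z)e_i = t_i f_i$, while $R_- E^0_- w = \sum_{i\le A} \ip{w}{f_i} f_i$; summing the two reconstructs $w$ because $\{f_i\}_{i=1}^{\mathcal{N}}$ is an orthonormal basis of $\mathcal{H}_N$.

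The reverse composition $\mathcal{E}^0 \mathcal{P}^0 = I$ is verified in exactly the same fashion, with the roles of $\{e_i\}$ and $\{f_i\}$ swapped and with the adjoint relation $(P-z)^* f_i = t_i e_i$ replacing $(P-z) e_i = t_i f_i$. Morally, $\mathcal{E}^0$ is engineered from the singular value decomposition of $P-z$: the large-singular-value part ($i > A$, where $t_i^{-1}$ is well defined) is inverted directly, while the small-singular-value block ($i\le A$) is parked into the auxiliary copy of $\C^A$ via the Grushin operators $R_\pm$. There is no serious obstacle, only bookkeeping; the one thing to keep careful track of is which index range each partial sum runs over, so that the cross terms $\ip{e_i}{e_j}$ with $i> A$, $j\le A$ vanish and the diagonal sums telescope to the identity.
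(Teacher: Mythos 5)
Your block-by-block verification is correct, and it is the standard argument: the paper itself does not reproduce a proof but simply cites Vogel, where the inverse is checked by exactly this kind of direct SVD-based computation. The only tiny implicit step worth flagging is that $t_i > 0$ for $i > A$ (so that $t_i^{-1}$ in $E^0$ makes sense), which holds because $\alpha = N^{-2\rho} > 0$ and $A$ is defined as the largest index with $t_i^2 \le \alpha$.
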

\begin{proof}
See \citep[Section 5.1]{Vogel}.
\end{proof}

To ease notation, the $z$ in the argument for these operators will often be dropped. Unless specified, all estimates are uniform in $z$.

\begin{claim}[\textbf{Invertibility of $\bm{\mathcal{ P}^\delta$}}]\label{claim:5.1}
$\mathcal{P}^\delta $ is invertible if $\delta \norm{\mathcal{ G}_\omega E^0 }\ll 1$.
\end{claim}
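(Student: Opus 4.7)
The plan is to reduce invertibility to a Neumann series argument by factoring off the already-invertible unperturbed Grushin operator $\mathcal{P}^0$. Write
\begin{align}
\mathcal{P}^\delta = \mathcal{P}^0 + \delta \widetilde{\mathcal{G}}, \qquad \widetilde{\mathcal{G}} := \mat{\mathcal{G}_\omega & 0 \\ 0 & 0},
\end{align}
and right-multiply by $\mathcal{E} = \mathcal{E}^0$ from the previous lemma to get $\mathcal{P}^\delta \mathcal{E} = I + \delta \widetilde{\mathcal{G}} \mathcal{E}$.

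The next step is to identify the operator norm of the perturbation term. Using the block form of $\mathcal{E}$ from \eqref{eq:scriptE},
\begin{align}
\widetilde{\mathcal{G}} \mathcal{E} = \mat{\mathcal{G}_\omega E & \mathcal{G}_\omega E_+ \\ 0 & 0},
\end{align}
so the norm of $\widetilde{\mathcal{G}}\mathcal{E}$ on $\mathcal{H}_N \oplus \mathbb{C}^A$ equals the norm of the row operator $\mathcal{G}_\omega(E + E_+): \mathcal{H}_N \oplus \mathbb{C}^A \to \mathcal{H}_N$, which is the quantity appearing in the hypothesis. Under the assumption $\delta \norm{\mathcal{G}_\omega(E+E_+)} < 1$, the operator $I + \delta \widetilde{\mathcal{G}} \mathcal{E}$ is invertible by a Neumann series, with inverse $\sum_{k \ge 0} (-\delta)^k (\widetilde{\mathcal{G}}\mathcal{E})^k$. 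Thus $\mathcal{P}^\delta$ has the right inverse
\begin{align}
\mathcal{E}^\delta := \mathcal{E}\left(I + \delta \widetilde{\mathcal{G}} \mathcal{E}\right)^{-1}.
\end{align}

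To conclude that $\mathcal{P}^\delta$ is genuinely invertible (and not merely right-invertible), one can either repeat the argument on the left by writing $\mathcal{P}^\delta = \mathcal{P}^0(I + \delta \mathcal{E}\widetilde{\mathcal{G}})$ and using that $\|\mathcal{E}\widetilde{\mathcal{G}}\|$ equals the same quantity (or is controlled by it), or simply observe that both $\mathcal{P}^0$ and $\mathcal{P}^\delta$ act between finite-dimensional spaces of equal dimension $\mathcal{N} + A$, so a one-sided inverse is automatically two-sided. There is no real obstacle here; the content of the claim is purely the identification of the operator $\widetilde{\mathcal{G}}\mathcal{E}$ whose norm governs the perturbation size, and the work in the rest of the paper will consist of showing that this norm is indeed small with high probability once $\delta$ and the spectral cut-off $A$ are chosen appropriately.
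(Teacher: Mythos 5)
Your argument is the same as the paper's: right-multiply by $\mathcal{E}^0$ to produce $\mathcal{P}^\delta\mathcal{E}^0 = I + K$ with $K$ the block operator with top row $(\delta\mathcal{G}_\omega E,\ \delta\mathcal{G}_\omega E_+)$, invert $I+K$ by a Neumann series when $\|K\|<1$, and upgrade the right inverse to a two-sided inverse. Your explicit identification of $\|K\|$ with the norm of the row operator $\mathcal{G}_\omega(E+E_+)$ and the observation that a one-sided inverse on a finite-dimensional space is automatically two-sided are both correct and, if anything, make the step slightly more transparent than the paper's remark that ``a similar argument shows this is a left inverse as well.''
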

\begin{proof}
By computation	
\begin{align}
\mathcal{P}^\delta \mathcal{E}^0 = 1 + \mat{\delta \mathcal{G}_\omega E^0 & \delta \mathcal{G}_\omega E_+^0 \\ 0 & 0 } : = 1 + K.
\end{align}
If $\norm{K} < 1$ (which is true given the hypothesis), then $(I+K)^{-1}$ exists as a Neumann series, and we get $\mathcal{ P}^\delta \mathcal{E}^0 (I+K)^{-1} = I$ (a similar argument shows this is a left inverse as well).
\end{proof}

\begin{lemma}[\textbf{Norm of $\bm{E^0}$}]\label{lemma.norm.E}
In the notation of \eqref{eq:scriptE}, $\norm{E^0}_{ } \le \alpha ^{-1/2}$.
\end{lemma}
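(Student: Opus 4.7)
The plan is straightforward: unwind the definition of $E^0$ in terms of the singular-value decomposition of $P-z$ and then apply a Parseval/orthogonality argument.

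First I would expand $E^0$ acting on an arbitrary $w \in \mathcal{H}_N$ using the convention $(u\otimes v)(w) = \langle w,v\rangle u$, giving
\begin{align}
E^0 w = \sum_{i=A+1}^{\mathcal{N}} \frac{1}{t_i}\, \langle w, f_i\rangle\, e_i.
\end{align}
Because the $\{e_i\}$ form an orthonormal basis of $\mathcal{H}_N$, the squared norm of the right-hand side is
\begin{align}
\|E^0 w\|^2 = \sum_{i=A+1}^{\mathcal{N}} \frac{1}{t_i^2}\, |\langle w, f_i\rangle|^2.
\end{align}

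Next I would use the defining property of $A$ in \eqref{eq:alpha definition}: since $A$ is the largest index with $t_i^2 \le \alpha$, every index $i \ge A+1$ satisfies $t_i^2 > \alpha$, so $1/t_i^2 < 1/\alpha$. Factoring out this uniform bound and then using Bessel's inequality for the orthonormal system $\{f_i\}$ yields
\begin{align}
\|E^0 w\|^2 \le \frac{1}{\alpha} \sum_{i=A+1}^{\mathcal{N}} |\langle w, f_i\rangle|^2 \le \frac{1}{\alpha}\, \|w\|^2.
\end{align}
Taking the supremum over unit vectors $w$ gives $\|E^0\|_\bullet \le \alpha^{-1/2}$, as claimed.

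There is no real obstacle here; the lemma is essentially a bookkeeping statement about the spectral decomposition that underlies the Grushin setup. The only point worth emphasizing in the write-up is the strict inequality $t_{A+1}^2 > \alpha$ coming from the definition of $A$ as a $\max$, which is what allows the uniform bound $1/t_i^2 < 1/\alpha$ on the range $i \ge A+1$.
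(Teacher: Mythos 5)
Your proof is correct and follows essentially the same approach as the paper: expand $E^0$ in the orthonormal systems $\{e_i\}$, $\{f_i\}$, and use $t_{A+1}^2 > \alpha$ to bound the coefficients. The paper is slightly terser, noting directly that $\|E^0\| = \|E^0 f_{A+1}\| = t_{A+1}^{-1}$ since $E^0$ is diagonal with respect to these bases; your Bessel-inequality version reaches the same bound with a bit more bookkeeping.
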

\begin{proof}
By construction, $E^0 = \sum_{M+1}^{\mathcal{N}} (t_i)^{-1} e_i \otimes f_i$, so that $\norm{E^0}_{ } = \norm{E^0 f_{M+1}}_{ } = (t_{M+1})^{-1} \le \alpha ^{-1/2}$.
\end{proof}

\begin{lemma}[\textbf{Norm of $\bm E_+^0 $}]
In the notation of \eqref{eq:scriptE}, $\norm{E_+^0}_{ } = 1$.
\end{lemma}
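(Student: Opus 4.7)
My plan is to compute $E_+^0$ on the standard basis of $\C^A$ and observe that orthonormality of the $e_i$ promotes $E_+^0$ to an isometry, so its operator norm is $1$.

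First, I would unpack the definition. Using the convention $(u\otimes v)(w) = \ip{w}{v}u$, for each $j \in \{1,\dots,A\}$,
\begin{align}
E_+^0 \delta_j = \sum_{i=1}^A (e_i \otimes \delta_i)(\delta_j) = \sum_{i=1}^A \ip{\delta_j}{\delta_i}\, e_i = e_j,
\end{align}
since $\{\delta_i\}$ is orthonormal in $\C^A$. Thus $E_+^0$ sends the standard basis of $\C^A$ to the first $A$ elements of the orthonormal family $\{e_i\}_{i=1}^{\mathcal{N}}$ in $\mathcal{H}_N$.

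Next, for an arbitrary $v = \sum_{j=1}^A c_j \delta_j \in \C^A$, linearity gives $E_+^0 v = \sum_{j=1}^A c_j e_j$, and orthonormality of $\{e_i\}_{i=1}^A \subset \mathcal{H}_N$ yields
\begin{align}
\norm{E_+^0 v}_{\mathcal{H}_N}^2 = \sum_{j=1}^A |c_j|^2 = \norm{v}_{\C^A}^2.
\end{align}
Hence $E_+^0$ is an isometry from $\C^A$ into $\mathcal{H}_N$, so $\norm{E_+^0}_{\bullet} = 1$.

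There is no real obstacle here; the statement is essentially a bookkeeping check that the chosen $e_i$ and $\delta_i$ really are orthonormal (which is built into the setup of the Grushin problem and the definition of $\C^A$). The only subtlety worth flagging is the convention for the tensor product notation, which determines whether $E_+^0$ maps $\delta_j$ to $e_j$ or to its conjugate; with the convention $(u\otimes v)(w) = \ip{w}{v}u$ stated just before Definition of $\mathcal{P}^\delta$, the computation above is exact.
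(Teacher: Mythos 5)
Your proof is correct and takes essentially the same approach as the paper, which simply observes that $E_+^0 = \sum_1^A e_i \otimes \delta_i$ has norm $1$ by orthonormality; you have merely spelled out the isometry computation that the paper leaves implicit.
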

\begin{proof}
By construction $E_+^0(z) = \sum_1^M e_i \otimes \delta_i$ which has norm 1.
\end{proof}

These lemmas, along with Lemma \ref{lemma:norm gaussian matrix}, guarantee that if $\delta = \mathcal{O}( \alpha ^{1/2} \mathcal{N}^{-1/2})$, then $\mathcal{ P}^\delta$ is invertible with overwhelming probability. Denote the inverse of $\mathcal{ P}^\delta$ by $\mathcal{ E}^\delta$ with the same notation for its components as in \eqref{eq:scriptE}.

Define $P^\delta = P + \delta \mathcal{G}_\omega$. By Schur's complement formula, if $P^\delta -z$ is invertible,
\begin{align}
\det \mat{P^\delta -z & R_- \\ R_+ & 0 } = \det(P^\delta - z) \det (-R_+ (P^\delta - z)^{-1} R_-).
\end{align}
Writing $\mathcal{ P}^\delta \mathcal{E}^\delta = 1$, we get that $-R_- = (P^\delta -z) E_+^\delta (E_{-+}^\delta)^{-1}$ and $R_+ E_+^\delta = 1$. Therefore $-R_+ (P^\delta - z)^{-1} R_- = (E_{-+}^\delta )^{-1}$, so that
\begin{align}
\log |\det (P^\delta - z ) | = \log | \det \mathcal{P }^\delta (z) | + \log |\det E_{-+}^\delta (z) | . \label{Schur}
\end{align}
Note that $P^\delta - z$ is invertible if and only if $E_{-+}^\delta$ is invertible. Therefore \eqref{Schur} holds even when $P^\delta - z$ is not invertible.

Therefore, to prove Theorem \ref{thm.main}, it suffices to show summability of the probability of the events:
\begin{align}
\mathcal{A}_N : = \set{ \left |\underbrace{ (\mathcal{N})^{-1} (\log | \det \mathcal{P}^\delta | + \log |\det E_{-+}^\delta (z) | ) - \fint_X \log |z-f_0(x) | \dd \mu }_{: = B} \right | > \e_N}.
\end{align}
We let $\e_N = N^{-\gamma}$ for a suitably chosen $\gamma =\gamma (d,\kappa)> 0$. Expand $B = B_1 + B_2 + B_3 $ where:
 \begin{align}
 B_1 &= \mathcal{N}^{-1} \log |\det \mathcal{P} ^0| - \strokedint_X \log |z - f_0 (x) | \dd \mu(x), \label{eq:B_1}\\
 B_2 &=\mathcal{N}^{-1}(\log |\det \mathcal{P}^\delta | - \log |\det \mathcal{P}^0| ) , \label{eq:B_2}\\
 B_3 & = \mathcal{N}^{-1} \log | \det E_{-+}^\delta | \label{eq:B_3} .
 \end{align}

Controlling $B_1$ requires the most work as it requires utilizing the calculus of Toeplitz operators. However, it is completely deterministic, and remains true for unperturbed operators. $B_2$ will be easily shown to be negligible. Proving a lower bound on $B_3$ is the key ingredient in proving Theorem \ref{thm.main}, as it will force the events $\mathcal{A}_N$ to sufficiently small probability. Without a perturbation, $B_3$ will have no lower bound. 

Proving bounds on $B_2$ and $B_3$ closely follow \citep{Vogel}.

\begin{lemma}[\textbf{Bound on $\bm{E_{-+}}$}]\label{lemma:bound4}
In the notation of \eqref{eq:scriptE}, $\norm{E_{-+}^0}_{ } \le \sqrt{\alpha}$.
\end{lemma}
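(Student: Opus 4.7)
The plan is to exploit the explicit diagonal form of $E_{-+}^0$ given in \eqref{eq:scriptE} together with the definition of $A$ in \eqref{eq:alpha definition}.

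First I would recall that by construction
\begin{align}
E_{-+} = E_{-+}^0 = -\sum_{i=1}^{A} t_i\, \delta_i \otimes \delta_i,
\end{align}
where $\{\delta_i\}_{i=1}^A$ is the standard (orthonormal) basis of $\C^A$. Since $(\delta_i \otimes \delta_i)(w) = \ip{w}{\delta_i}\delta_i$ is the rank-one orthogonal projection onto the $i$-th coordinate axis, $E_{-+}$ is a diagonal operator on $\C^A$ with diagonal entries $-t_1, -t_2, \ldots, -t_A$.

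Next, the operator norm of a diagonal matrix on a finite-dimensional Hilbert space is the maximum of the absolute values of its diagonal entries, so
\begin{align}
\norm{E_{-+}}_{\bullet} = \max_{1 \le i \le A} |t_i| = t_A,
\end{align}
where the last equality uses that $t_1 \le t_2 \le \cdots \le t_{\mathcal{N}}$ and $t_i \ge 0$.

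Finally, by the definition of $A$ in \eqref{eq:alpha definition}, $t_A^2 \le \alpha$, hence $t_A \le \sqrt{\alpha}$, giving the claimed bound. There is no real obstacle here: the lemma is essentially a direct reading of \eqref{eq:scriptE} combined with the cutoff defining $A$, and the only thing to be careful about is recognizing that $\sum_1^A t_i\, \delta_i \otimes \delta_i$ is diagonal in an orthonormal basis so its spectral norm is simply the largest $|t_i|$ used.
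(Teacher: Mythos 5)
Your proof is correct and is essentially the same as the paper's: both read off that $E_{-+}$ is diagonal with entries $-t_i$ and conclude $\norm{E_{-+}} = t_A \le \sqrt{\alpha}$ from the ordering of the $t_i$ and the definition of $A$. The paper compresses this by simply evaluating $E_{-+}$ at $\delta_A$, but the content is identical.
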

\begin{proof}
By construction, $E_{-+}^0 = -\sum_1^A t_j \delta_j \otimes \delta_j$, so $\norm{E_{-+}^0}_{ } = |E_{-+}^0(\delta_A)| = t_A\le \sqrt{\alpha}$.
\end{proof}

\begin{lemma}[\textbf{Bound on $\bm{E^\delta}$}]\label{lemma:bound E tau}
In the notation of \eqref{eq:scriptE}, $\norm{E^\delta}_{ } \le 2 \alpha ^{-1/2}$ with overwhelming probability.
\end{lemma}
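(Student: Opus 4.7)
The plan is to identify $E^\delta$ explicitly from the Neumann inversion of $\mathcal{P}^\delta$ already used in Claim \ref{claim:5.1}, and then bound it using Lemma \ref{lemma.norm.E} and Lemma \ref{lemma:norm gaussian matrix}. Concretely, writing $\mathcal{P}^\delta \mathcal{E}^0 = I + K$ with
\begin{align}
K = \begin{pmatrix} \delta \mathcal{G}_\omega E^0 & \delta \mathcal{G}_\omega E_+^0 \\ 0 & 0 \end{pmatrix},
\end{align}
the block upper-triangular structure of $I+K$ gives (whenever it is invertible)
\begin{align}
(I+K)^{-1} = \begin{pmatrix} (I+\delta \mathcal{G}_\omega E^0)^{-1} & -(I+\delta \mathcal{G}_\omega E^0)^{-1} \delta \mathcal{G}_\omega E_+^0 \\ 0 & I \end{pmatrix}.
\end{align}
Since $\mathcal{E}^\delta = \mathcal{E}^0 (I+K)^{-1}$, reading off the upper-left block yields the clean formula
\begin{align}
E^\delta = E^0 \bigl(I + \delta \mathcal{G}_\omega E^0 \bigr)^{-1}.
\end{align}

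From here the estimate is a Neumann series argument. By Lemma \ref{lemma.norm.E}, $\norm{E^0}_\bullet \le \alpha^{-1/2}$, and by Lemma \ref{lemma:norm gaussian matrix}, $\norm{\mathcal{G}_\omega}_\bullet \le C\mathcal{N}^{1/2}$ with overwhelming probability. Hence, on this high-probability event,
\begin{align}
\norm{\delta \mathcal{G}_\omega E^0}_\bullet \le C \delta \mathcal{N}^{1/2} \alpha^{-1/2}.
\end{align}
Recalling $\alpha = N^{-2\rho}$ with $\rho \in (0,\min(1/2,\e))$ and $\delta = \mathcal{O}(\mathcal{N}^{-1/2-\e}) = \mathcal{O}(N^{-d/2 - d\e})$, we have
\begin{align}
C\delta \mathcal{N}^{1/2} \alpha^{-1/2} = \mathcal{O}\bigl(N^{\rho - d\e}\bigr) = o(1),
\end{align}
since $\rho < \e \le d\e$. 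In particular, for $N$ large this quantity is at most $1/2$ with overwhelming probability, so $\norm{(I+\delta \mathcal{G}_\omega E^0)^{-1}}_\bullet \le 2$ by the Neumann series, and consequently
\begin{align}
\norm{E^\delta}_\bullet \le \norm{E^0}_\bullet \, \norm{(I+\delta \mathcal{G}_\omega E^0)^{-1}}_\bullet \le 2\alpha^{-1/2}.
\end{align}

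I do not expect any real obstacle: the identification of $E^\delta$ is an entirely algebraic manipulation of the block inverse done already in Claim \ref{claim:5.1}, and the probabilistic input is only Lemma \ref{lemma:norm gaussian matrix}. The one point requiring a moment of care is verifying that the chosen scales $\delta \lesssim \mathcal{N}^{-1/2-\e}$ and $\alpha = N^{-2\rho}$ indeed make $\delta \|\mathcal{G}_\omega\| \|E^0\|$ small; this is where the constraint $\rho < \e$ imposed in \eqref{eq:alpha definition} enters decisively.
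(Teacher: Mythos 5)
Your proof is correct and follows the same route the paper takes, which compresses the entire argument into a single line (``by the Neumann construction, $\norm{E^\delta} = \norm{E(1+\delta\mathcal{G}_\omega E)^{-1}} \le 2\norm{E}$''). You supply the detail the paper leaves implicit: the block-triangular inversion of $I+K$ identifying $E^\delta = E^0(I+\delta\mathcal{G}_\omega E^0)^{-1}$, and the verification---via $\delta = \mathcal{O}(\mathcal{N}^{-1/2-\e})$, $\alpha = N^{-2\rho}$, $\rho < \e$, and Lemma~\ref{lemma:norm gaussian matrix}---that $\norm{\delta\mathcal{G}_\omega E^0} \le 1/2$ with overwhelming probability, which is exactly what justifies the factor of $2$.
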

\begin{proof}
By the Neumann construction, $\norm{E^\delta}_{ } = \norm{E^0(1 + \delta \mathcal{G}_\omega E^0)^{-1}}_{ } \le 2 \norm{E^0}_{ }$ which is bounded by $2 \alpha ^{-1/2} $ by Lemma \ref{lemma.norm.E}.
\end{proof}

\begin{claim}[\textbf{Bound on $\bm{B_2}$}]\label{claim.b2 bound}
In the notation of \eqref{eq:B_2}, $ B_2 = \mathcal{O}(\delta \alpha^{-1/2} \mathcal{N}^{1/2} )$ with overwhelming probability.
\end{claim}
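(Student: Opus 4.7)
The plan is to use the block upper triangular structure of $\mathcal{P}^\delta \mathcal{E}^0$ to reduce the ratio of determinants to a single $\mathcal{N}\times \mathcal{N}$ determinant, and then combine the standard determinant--operator norm inequality with Lemma \ref{lemma:norm gaussian matrix} and Lemma \ref{lemma.norm.E}.

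First, I would repeat the computation from Claim \ref{claim:5.1} to write
\begin{align}
\mathcal{P}^\delta \mathcal{E}^0 = I + K, \qquad K = \mat{ \delta \mathcal{G}_\omega E^0 & \delta \mathcal{G}_\omega E_+^0 \\ 0 & 0 }.
\end{align}
Because $I+K$ is block upper triangular with identity in its lower right block, its determinant equals $\det(I + \delta \mathcal{G}_\omega E^0)$. Using $\mathcal{E}^0 = (\mathcal{P}^0)^{-1}$, this gives
\begin{align}
\log |\det \mathcal{P}^\delta | - \log |\det \mathcal{P}^0 | = \log |\det (I + \delta \mathcal{G}_\omega E^0) |,
\end{align}
so $B_2 = \mathcal{N}^{-1} \log |\det (I + \delta \mathcal{G}_\omega E^0) |$.

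Next, for any $\mathcal{N}\times \mathcal{N}$ matrix $M$ with $\norm{M}\le 1/2$, I would use the inequality $|\det(I+M)| \le (1+\norm{M})^{\mathcal{N}}$ together with the Neumann-series estimate $|\det(I+M)|^{-1} = |\det (I+M)^{-1}| \le \norm{(I+M)^{-1}}^{\mathcal{N}} \le (1-\norm{M})^{-\mathcal{N}}$ to obtain
\begin{align}
|\log |\det(I+M)|| \le \mathcal{N}\max\bigl(\log(1+\norm{M}),\,|\log(1-\norm{M})|\bigr) \le 2\mathcal{N}\norm{M}.
\end{align}
I then apply this with $M = \delta \mathcal{G}_\omega E^0$. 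By Lemma \ref{lemma:norm gaussian matrix}, $\norm{\mathcal{G}_\omega} \le C\mathcal{N}^{1/2}$ with overwhelming probability, and by Lemma \ref{lemma.norm.E}, $\norm{E^0} \le \alpha^{-1/2}$, so $\norm{M} \le C\delta \alpha^{-1/2}\mathcal{N}^{1/2}$.

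Finally, under the invertibility condition $\delta = \mathcal{O}(\alpha^{1/2}\mathcal{N}^{-1/2})$ already noted after Lemma \ref{lemma.norm.E} (which is the standing assumption that makes $\mathcal{P}^\delta$ invertible with overwhelming probability), one has $\norm{M}\le 1/2$ for $N$ large, so the previous display applies and yields
\begin{align}
|B_2| \le 2\delta \norm{\mathcal{G}_\omega}\norm{E^0} = \mathcal{O}(\delta \alpha^{-1/2}\mathcal{N}^{1/2})
\end{align}
with overwhelming probability. There is no real obstacle in this argument; the only subtlety is checking that $\norm{M}$ is small enough for the two-sided log-determinant bound to apply, and that is automatic from the invertibility hypothesis on $\delta$. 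All the probabilistic content has already been packaged into Lemma \ref{lemma:norm gaussian matrix}.
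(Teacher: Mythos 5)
Your proof is correct, and it takes a genuinely different route from the paper's. The paper differentiates in the perturbation parameter: it applies Jacobi's formula to write $\mathcal{N}B_2 = \int_0^\delta \Re\bigl(\Tr(E^\tau \mathcal{G}_\omega)\bigr)\,d\tau$, bounds the trace by H\"older's inequality for Schatten norms ($|\Tr(E^\tau\mathcal{G}_\omega)| \le \norm{E^\tau}\norm{\mathcal{G}_\omega}_{\mathrm{tr}} \le \mathcal{N}\norm{E^\tau}\norm{\mathcal{G}_\omega}$), and invokes Lemma \ref{lemma:bound E tau} to control $\norm{E^\tau}$ uniformly on $[0,\delta]$. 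You instead note that $\mathcal{P}^\delta\mathcal{E}^0 = I+K$ is block upper triangular with identity in the lower-right block, so $\det(\mathcal{P}^\delta)/\det(\mathcal{P}^0) = \det(I + \delta\mathcal{G}_\omega E^0)$ exactly, and then apply the elementary two-sided bound $|\log|\det(I+M)|| \le 2\mathcal{N}\norm{M}$ valid for $\norm{M}\le 1/2$. Both routes ultimately reduce to the same operator-norm inputs (Lemma \ref{lemma:norm gaussian matrix} and $\norm{E^0}\le\alpha^{-1/2}$), but yours avoids Jacobi's formula, the $\tau$-integral, and Lemma \ref{lemma:bound E tau} entirely, needing only the unperturbed inverse. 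That is a real simplification. The paper's integration-in-$\tau$ technique is the one inherited from Vogel and Hager; it is slightly heavier here but extends more readily when finer control of the intermediate inverses $\mathcal{E}^\tau$ is needed. One cosmetic point: the standing condition that makes $\norm{M}\le 1/2$ eventually is really $\delta = \mathcal{O}(\mathcal{N}^{-1/2-\e})$ together with $\rho < \e$ (as in \eqref{eq:choice of delta} and \eqref{eq:alpha definition}), which forces $\delta\alpha^{-1/2}\mathcal{N}^{1/2}\to 0$; the phrase ``$\delta = \mathcal{O}(\alpha^{1/2}\mathcal{N}^{-1/2})$'' alone would leave an arbitrary constant, but the paper's actual choice of $\delta$ makes this moot, and your argument goes through.
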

\begin{proof}
Using Jacobi's formula, $(\log \det A )' = \Tr(A^{-1} A')$, we have that
\begin{align}
\mathcal{N}B_2 &= \log |\det \mathcal{P}^\delta | - \log |\det \mathcal{P} | = \int_0^\delta \frac{d}{d\tau} \log |\det \mathcal{P} ^\tau| \dd \tau \\
&= \int _0^\delta \Re{ \Tr ( \mathcal{E}^\tau \frac{d}{d \tau} \mathcal{P}^\tau ) } \dd \tau = \int_0^\delta \Re{ \Tr (E^\tau \mathcal{G}_\omega)} \dd \tau .
\end{align}
 Taking absolute values and using properties of trace norms
\begin{align}
|\log |\det \mathcal{P}^\delta | - \log |\det \mathcal{P}^0 || &\le \delta \sup _{\tau \in [0,\delta]} \norm{E^\tau}_{ } \norm{\mathcal{G}_\omega}_{tr} \le \mathcal{O} (\delta \alpha^{-1/2} \mathcal{N} \norm{ \mathcal{G}_\omega}) , \label{eq.3} 
\end{align}
where we used Lemma \ref{lemma:bound E tau}, and H\"older's inequality for the Schatten norm. Recalling the bound on $\mathcal{ G}_\omega$, \eqref{eq.3} is $\mathcal{O} (\delta \alpha^{-1/2} \mathcal{N}^{3/2})$ with overwhelming probability. 
\end{proof}

The following theorem about singular values of randomly perturbed matrices is required for proving a lower bound of $B_3$. Given a matrix $B$, let $s_1(B) \ge s_2(B) \ge \cdots \ge s_N(B)$ be its singular values.
\begin{prop}\label{thm:f1}
If $B$ is an $N \times N$ complex matrix and $\mathcal{G}_\omega$ is a random matrix with independent identically distributed complex Gaussian entries of mean $0$ and variance $1$, then there exists $C> 0$ such that for all $\delta >0$, $t > 0$:
\begin{align}
\mathbb{P}(s_N(B + \delta \mathcal{G}_\omega) < \delta t ) \le C Nt^2 .
\end{align}
\end{prop}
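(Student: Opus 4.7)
The plan is to first absorb the scaling. Since $s_N(B + \delta \mathcal{G}_\omega) = \delta \cdot s_N(\delta^{-1} B + \mathcal{G}_\omega)$, it suffices to prove that for any fixed $M \in \C^{N \times N}$ and $G$ a standard complex Ginibre matrix,
\[
\P(s_N(M + G) < t) \le C N t^2
\]
uniformly in $t > 0$ and $M$.

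The geometric input is a column-distance identity. Let $a_1, \ldots, a_N$ be the columns of $A := M + G$, set $H_j := \operatorname{span}\{a_k : k \ne j\}$, and $d_j := \dist(a_j, H_j)$. A direct calculation gives
\[
\|A^{-1}\|_{\HS}^2 = \sum_{j=1}^N d_j^{-2},
\]
so that $s_N(A)^{-2} \le \sum_j d_j^{-2}$. Conditioning on $\{a_k : k \ne j\}$, the hyperplane $H_j$ is almost surely $(N-1)$-dimensional with unit normal $w_j$, and the conditional law of $d_j$ is that of $|c_j + Z_j|$, with $c_j = w_j^* m_j$ deterministic and $Z_j = w_j^* g_j$ a standard complex Gaussian on $\C$. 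Since the density of a standard complex Gaussian is bounded above by $\pi^{-1}$, one obtains the conditional small-ball bound $\P(d_j \le s \mid a_k,\ k \ne j) \le s^2$ for every $s > 0$.

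A crude union bound over $j$, together with $s_N(A) \ge N^{-1/2}\min_j d_j$, delivers only $\P(s_N(A) < t) \le N^2 t^2$. To obtain the claimed linear dependence on $N$ I would sharpen the argument by exploiting the bi-unitary invariance of the complex Ginibre law to reduce to $M$ diagonal, and then invoking the explicit joint density of singular values. In the centred case $M = 0$, marginalizing the Wishart joint density of the eigenvalues of $G^*G$ yields the exact formula $N e^{-N\lambda}$ for the density of $\lambda_N := s_N(G)^2$ at $\lambda \ge 0$, hence $\P(s_N(G) < t) = 1 - e^{-N t^2} \le N t^2$. The bound extends to arbitrary $M$ by a stochastic-comparison argument, namely $\P(s_N(M+G) < t) \le \P(s_N(G) < t)$, which at its heart reduces to a unimodality-type inequality for the complex Gaussian density integrated against the bi-unitarily invariant set $\{A : s_N(A) < t\}$.

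The main obstacle is precisely this sharpening from $N^2 t^2$ to $N t^2$: the column-wise union bound costs a factor of $N$ because it treats the worst column as decisive, whereas the actual smallest singular value couples the columns through the quadratic identity $\|A^{-1}\|_{\HS}^2 = \sum_j d_j^{-2}$. Recovering the correct linear $N$-dependence requires either the Wishart density computation above or the stochastic-comparison argument, both of which exploit the fine structure of the complex Gaussian in a way the elementary column-distance estimate does not.
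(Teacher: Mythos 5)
Your preliminary reductions are all correct: the scaling to $\delta = 1$, the identity $\|A^{-1}\|_{\HS}^2 = \sum_j d_j^{-2}$, the conditional small-ball bound $\P(d_j \le s \mid a_k,\ k\ne j) \le s^2$, the crude estimate $\P(s_N(A) < t) \le N^2 t^2$, and the exact formula $\P(s_N(G) < t) = 1 - e^{-Nt^2}$ in the centred case. The gap is the step that is supposed to carry this to the claimed $Nt^2$ bound for general $M$: the comparison $\P(s_N(M+G) < t) \le \P(s_N(G) < t)$. You suggest it follows from an Anderson/unimodality inequality because the set $\{A : s_N(A) < t\}$ is bi-unitarily invariant and symmetric, but Anderson's lemma also requires convexity, and this set is not convex: $\mathrm{diag}(0,1)$ and $\mathrm{diag}(1,0)$ both lie in it, yet their midpoint $\mathrm{diag}(1/2,1/2)$ does not when $t < 1/2$. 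I know of no proof of this comparison inequality, and no off-the-shelf unimodality principle yields it; if true, it is a nontrivial fact, plausibly as hard as the proposition itself. So as written the plan does not close.

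The reference the paper cites (Vogel, following Sankar--Spielman--Teng) sharpens your own column-distance estimate in a way that avoids the comparison entirely. Instead of taking a union bound over the $N$ coordinate directions to control $\min_j d_j$, one probes $A^{-1}$ with a \emph{single} uniformly random unit vector $u$ independent of $A$. For a fixed unit vector $v$, the distance computation you already did, applied after a unitary change of basis and using right-unitary invariance of the Ginibre law, gives $\P(\|(A^*)^{-1} v\| > 1/s) \le s^2$ with no factor of $N$, and this survives averaging over $v = u$. On the other hand $\|(A^*)^{-1} u\| \ge \|(A^*)^{-1}\|\,|\langle v_1, u\rangle|$ where $v_1$ is the top right singular vector of $(A^*)^{-1}$, and $|\langle v_1, u\rangle| \ge c N^{-1/2}$ with probability at least $1/2$ over $u$ alone. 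Combining the two, $\P(s_N(A) < t) = \P(\|(A^*)^{-1}\| > 1/t) \le 2\,\P(\|(A^*)^{-1} u\| > c/(\sqrt{N} t)) \le 2 N t^2 / c^2$. This is where the single power of $N$ comes from; it uses only the elementary distance estimate you already have, plus a cheap anticoncentration of the overlap $\langle v_1, u\rangle$, rather than any fine structure of the Wishart density or any comparison to the centred case.
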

\begin{proof}
See \citep[Theorem 23]{Vogel}, which is a complex version proven by Sankar, Spielmann and Teng in \citep[Lemma 3.2]{Sankar}.
\end{proof}

\begin{claim}[\textbf{Bound on $\bm{B_3}$}]\label{claim_bound b_3}
In the notation of \eqref{eq:B_3}, $B_3$ obeys the probabilistic upper bound
\begin{align}
\P( \mathcal{N}^{-1} \log |\det E_{-+}^\delta | < 0) > 1 - e^{-\mathcal{N}}, \label{eq:claimUp}
\end{align}
for $N \gg 1$. And $B_3$ obeys the probabilistic lower bound: there exists there exists $C> 0 $ such that for all $\delta > 0$
\begin{align}
\P\left (\mathcal{N}^{-1} \log |\det E_{-+}^\delta | \ge A \mathcal{N}^{-1} \log (\delta t) \right )> 1 - C \mathcal{N} t^2 - e^{-\mathcal{N}}.
\end{align}
\end{claim}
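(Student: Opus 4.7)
Let $s_1\ge\cdots\ge s_A$ denote the singular values of the $A\times A$ matrix $E_{-+}^\delta$; then
\begin{align}
A\log s_A(E_{-+}^\delta) \le \log|\det E_{-+}^\delta| \le A\log\|E_{-+}^\delta\|,
\end{align}
so the two halves of the claim reduce to a norm bound and a smallest-singular-value bound on $E_{-+}^\delta$. From $\mathcal{E}^\delta = \mathcal{E}^0(I+K)^{-1}$ and its dual $\mathcal{E}^\delta = (I+K')^{-1}\mathcal{E}^0$ with $K' := \mathcal{E}^0\mathcal{P}^\delta - I$, reading off the $(-+)$-block gives the two equivalent identities
\begin{align}\label{eq:planidents}
E_{-+}^\delta = E_{-+}^0 - \delta E_-^\delta \mathcal{G}_\omega E_+^0 = E_{-+}^0 - \delta E_-^0 \mathcal{G}_\omega E_+^\delta,
\end{align}
and substituting $E_+^\delta = E_+^0 - \delta E^\delta\mathcal{G}_\omega E_+^0$ into the second form yields
\begin{align}\label{eq:planmain}
E_{-+}^\delta = E_{-+}^0 - \delta E_-^0\mathcal{G}_\omega E_+^0 + \delta^2 E_-^0\mathcal{G}_\omega E^\delta \mathcal{G}_\omega E_+^0.
\end{align}

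For the upper bound \eqref{eq:claimUp}, I use the first identity in \eqref{eq:planidents}: applying Lemma~\ref{lemma:bound4}, Lemma~\ref{lemma:norm gaussian matrix}, and the bound $\|E_-^\delta\|\le 2$ (which follows from $E_-^\delta = E_-^0(I+\delta\mathcal{G}_\omega E^0)^{-1}$ by the same Neumann argument as Lemma~\ref{lemma:bound E tau}) gives $\|E_{-+}^\delta\| \le \sqrt\alpha + 2C\delta\mathcal{N}^{1/2}$ with overwhelming probability. Under the standing hypotheses $\delta = O(\mathcal{N}^{-1/2-\e})$ and $\rho<\e$ one has $\delta\mathcal{N}^{1/2} = O(\mathcal{N}^{-\e}) \ll \sqrt\alpha$, so $\|E_{-+}^\delta\| < 1$ for $N\gg 1$, whence $|\det E_{-+}^\delta| < 1$ and \eqref{eq:claimUp} follows.

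For the lower bound I set up a conditional application of Proposition~\ref{thm:f1}. Decompose $\mathcal{G}_\omega = R_-\tilde{\mathcal{G}}R_+ + \mathcal{G}'$, where $\tilde{\mathcal{G}} := R_-^*\mathcal{G}_\omega R_+^*$ is an $A\times A$ matrix of i.i.d.\ complex Gaussians (by unitary invariance of the law of $\mathcal{G}_\omega$) and $\mathcal{G}' := \mathcal{G}_\omega - R_-\tilde{\mathcal{G}}R_+$ is independent of $\tilde{\mathcal{G}}$. The identities $R_+E^0 = E^0 R_- = 0$ (immediate from the definition of $E^0$) and their perturbed analogues $R_+E^\delta = E^\delta R_- = 0$ (read off from the off-diagonal blocks of $\mathcal{P}^\delta\mathcal{E}^\delta = I$) imply $\mathcal{G}_\omega E^0 = \mathcal{G}'E^0$, so $E^\delta = E^0(I+\delta\mathcal{G}' E^0)^{-1}$ is measurable with respect to $\mathcal{G}'$; moreover, every $\tilde{\mathcal{G}}$-bearing factor in the $\delta^2$ term of \eqref{eq:planmain} is killed by one of $R_+E^\delta = 0$ or $E^\delta R_- = 0$, while $E_-^0\mathcal{G}_\omega E_+^0 = R_-^*\mathcal{G}_\omega R_+^* = \tilde{\mathcal{G}}$ exactly. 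What survives is the affine identity
\begin{align}
E_{-+}^\delta = B(\mathcal{G}') - \delta\tilde{\mathcal{G}}, \qquad B(\mathcal{G}') := E_{-+}^0 + \delta^2 E_-^0\mathcal{G}' E^\delta \mathcal{G}' E_+^0.
\end{align}
Conditioning on $\mathcal{G}'$ and using Gaussian symmetry, Proposition~\ref{thm:f1} applied to the $A\times A$ matrix $B(\mathcal{G}') - \delta\tilde{\mathcal{G}}$ gives $\P(s_A(E_{-+}^\delta) < \delta t\mid\mathcal{G}') \le CAt^2$. Integrating over $\mathcal{G}'$ and absorbing the $\le e^{-\mathcal{N}}$ failure probability for the Neumann series underlying \eqref{eq:planmain} yields $\P(s_A(E_{-+}^\delta) < \delta t) \le C\mathcal{N} t^2 + e^{-\mathcal{N}}$, which combined with $\log|\det E_{-+}^\delta| \ge A\log s_A(E_{-+}^\delta)$ gives the stated lower bound.

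The main subtlety—and what makes Proposition~\ref{thm:f1} applicable at all—is the cancellation of all $\tilde{\mathcal{G}}$-bearing cross-terms in the $\delta^2$ correction, which turns \eqref{eq:planmain} into an affine function of $\tilde{\mathcal{G}}$ with coefficients measurable with respect to the independent $\mathcal{G}'$. Without this Grushin-structural observation, $\tilde{\mathcal{G}}$ and the "deterministic" piece of $E_{-+}^\delta$ would remain entangled and the Sankar--Spielman--Teng-type bound could not be invoked directly.
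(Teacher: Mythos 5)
Your proof is correct, but the route you take for the lower bound differs genuinely from the paper's. The paper uses the Schur complement identity $(E_{-+}^\delta)^{-1} = -R_+(P^\delta-z)^{-1}R_-$ together with $\norm{R_\pm} = 1$ to deduce $s_A(E_{-+}^\delta) \ge s_{\mathcal{N}}(P+\delta\mathcal{G}_\omega-z)$, and then invokes Proposition~\ref{thm:f1} directly on the full $\mathcal{N}\times\mathcal{N}$ matrix $(P-z) + \delta\mathcal{G}_\omega$, with $B = P-z$ the deterministic part. You instead split $\mathcal{G}_\omega$ into the $A\times A$ Ginibre block $\tilde{\mathcal{G}} = R_-^*\mathcal{G}_\omega R_+^*$ and an independent remainder $\mathcal{G}'$, exploit the Grushin identities $R_+E^0 = E^0 R_- = 0$ and $R_+E^\delta = E^\delta R_- = 0$ to kill every $\tilde{\mathcal{G}}$-bearing cross term in the $\delta^2$ correction, and then apply Proposition~\ref{thm:f1} conditionally on $\mathcal{G}'$ to the $A\times A$ matrix $B(\mathcal{G}') - \delta\tilde{\mathcal{G}}$. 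Both arguments are valid: I checked $E_-^0 = R_-^*$, $E_+^0 = R_+^*$, $\mathcal{G}_\omega E^0 = \mathcal{G}'E^0$ (hence $E^\delta$ is $\mathcal{G}'$-measurable), and the cancellation that leaves $E_{-+}^\delta$ affine in $\tilde{\mathcal{G}}$ with $\mathcal{G}'$-measurable coefficients, and both deliver $\P(s_A(E_{-+}^\delta) < \delta t) \le C\mathcal{N}t^2 + e^{-\mathcal{N}}$. The paper's route is shorter because the Schur-complement step is a one-line singular-value inequality that sidesteps any analysis of how $\mathcal{G}_\omega$ enters the Grushin blocks; your route is more hands-on, makes the probabilistic structure of $E_{-+}^\delta$ fully explicit, and even yields the slightly sharper intermediate constant $CAt^2$ before coarsening to $C\mathcal{N}t^2$, at the cost of the extra bookkeeping with $\tilde{\mathcal{G}}$ and $\mathcal{G}'$. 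The upper-bound half of your argument is essentially the paper's.
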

\begin{proof}
First, by the Neumann series construction and choice of $\delta$, with overwhelming probability,
\begin{align}
\norm{E_{-+}^\delta}_{ } \le \norm{E_{-+}^\delta - E _{-+}^0}_{ } + \norm{E_{-+}^0}_{ } &= \norm{ E_-^0 (1 - \delta \mathcal{G}_\omega E^0)^{-1} \delta \mathcal{G}_\omega E_+^0}_{ } + \norm{E_{-+}^0}_{ } \\
& \le 2 \norm{\delta \mathcal{G}_\omega}_{ } +\alpha ^{1/2} \le C \alpha^{1/2}. \label{eq:725}
\end{align}

So, in this event, $\norm{E_{-+}^\delta}_{ } \le C \alpha ^{1/2} < 1$ for $N \gg 1$, and therefore $\log |\det E_{-+}^\delta |< 0 $ proving \eqref{eq:claimUp}.

For the lower bound, first note that
\begin{align}
\log |\det E_{-+}^\delta | = \sum_1^A \log s_j (E_{-+}^\delta ) \ge A \log s_A(E_{-+}^\delta ) .
\end{align}
For a matrix $B$, let $t_1(B)$ be the smallest eigenvalue of $\sqrt{B^*B}$, so $s_A (E_{-+}^\delta ) = t_1 (E_{-+}^\delta)$. Assume that $P-z$ is invertible. Using that $(E_{-+}^0)^{-1} = - R_+ (P-z)^{-1} R_- $ and properties of singular values of sums and products of trace class operators, we get
\begin{align}
(t_1(E_{-+}^0))^{-1} &= s_1((E_{-+}^0)^{-1}) \le s_1(R_-) s_1 (R_+ ) s_1 ((P-z)^{-1}) = \norm{R_+}_{ } \norm{R_-}_{ } s_1 ((P-z)^{-1})\\
& = s_1 ((P-z)^{-1}) =( t_1(P-z))^{-1} = s_{\mathcal{N}}( (P-z)^{-1}).
\end{align}
For $\delta = \mathcal{O}( \mathcal{N}^{-1/2} \alpha^{1/2})$, this holds for $E_{-+}^\delta$ (the event of a singular matrix has probability zero and the singular values depend continuously on $\delta$) so $s_A (E_{-+}^\delta ) = t_1(E_{-+}^\delta ) \ge s_{\mathcal{N}} (P + \delta \mathcal{G}_\omega-z)$ with overwhelming probability.

Using Proposition \ref{thm:f1}, in the event that $\norm{ \mathcal{G}_\omega}_{ } \le C\mathcal{N} ^{1/2}$ (overwhelming probability) and $s_{ \mathcal{N}}(P-z +\delta \mathcal{G}_\omega) > \delta t $ (probability at least $1 - C \mathcal{N} t^2$), we have that $s_A(E_{-+}^\delta ) > \delta t$ with probability greater than $1- C \mathcal{N} t^2 - e^{-\mathcal{N}}$. Therefore
\begin{align}
\log |\det E_{-+}^\delta | \ge A \log s_A (E_{-+}^\delta ) \ge A \log (\delta t)
\end{align}
with probability $\ge 1 - e^{- \mathcal{N}} - C\mathcal{N}t^2$.
\end{proof}

\section{Bound on $B_1$} \label{section b1}
This section is devoted to estimating $B_1$ (as in \eqref{eq:B_1}) which involves computing the trace of a function of a Toeplitz operator belonging to an exotic symbol class. This closely follows \citep{Vogel}, however several simplifications arise partially due to requiring weaker bounds, and several modifications are required as we are working with Toeplitz operators.
\begin{claim}[\textbf{Bound on $\bm{B_1}$}]\label{claim.bound B_1}
For $\mathcal{ P}$ defined in \eqref{eq:scriptP},
 \begin{align}
 \log |\det \mathcal{P}^0 | = N^d \strokedint _{X} \log |f_0 (x) - z |^2 \dd\mu +\mathcal{O} ( N^{d-\min(2\rho\kappa,(1-2\rho)) } \log (N) ).
 \end{align}
\end{claim}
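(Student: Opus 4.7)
My strategy is to express $\log|\det \mathcal{P}|$ as a spectral sum over the singular values $t_i$ of $P-z$, to replace that sum by the trace of a smoothed logarithm of $Q = (P-z)^*(P-z)$, and to evaluate that trace using the exotic functional calculus (Proposition \ref{thm.functional calc}) together with the trace formula (Proposition \ref{thm.trace}). The algebraic starting point is the identity $|\det \mathcal{P}| = \prod_{i > A} t_i$, obtained by writing $\mathcal{P} = \mathcal{P}^0$ as an explicit block matrix in the bases $\{e_i\}$, $\{f_i\}$, $\{\delta_i\}$ (so that $P - z$ is diagonal, $R_+ = (I_A\ 0)$, and $R_-$ has the corresponding transposed shape) and performing a row swap followed by cofactor expansion along the bottom zero block. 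Hence
\begin{align}
2 \log|\det \mathcal{P}| = \sum_{i > A} \log t_i^2.
\end{align}

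Fix $\phi \in C^\infty(\R)$ with $\phi(s) = 0$ for $s \le 1/2$ and $\phi(s) = 1$ for $s \ge 1$, and set $\chi_\alpha(s) := \phi(s/\alpha) \log s$, extended smoothly by $0$ at the origin. Then $\sum_{i > A} \log t_i^2 = \Tr \chi_\alpha(Q) - \sum_{\alpha/2 \le t_i^2 < \alpha} \chi_\alpha(t_i^2)$, and the second piece is bounded by $\#\{i : t_i^2 \le \alpha\} \cdot \mathcal{O}(\log N)$. To control the small eigenvalue count I would invoke the scaling device of Claim \ref{claim:poop}: because $Q = T_N g_z + \mathcal{O}(N^{-\infty})$ with principal symbol $|f_0-z|^2 \ge 0$, the rescaled symbol $N^{2\rho} g_z$ lies in $S_\rho(m)$ for the $\rho$-order function $m = N^{2\rho}|f_0-z|^2 + 1$. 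Choosing a nonnegative, compactly supported $\psi \in C^\infty(\R)$ with $\psi \ge 1$ on $[0,1]$, Proposition \ref{thm.functional calc} yields $\psi(N^{2\rho} Q) = T_N h + \mathcal{O}(N^{-\infty})$ with $h \in S_\rho(m^{-1})$ of principal symbol $\psi(N^{2\rho}|f_0-z|^2)$, supported where $|f_0-z|^2 \le C\alpha$. Combining Proposition \ref{thm.trace} with the hypothesis \eqref{eq:f_0 property},
\begin{align}
\#\{i : t_i^2 \le \alpha\} \le \Tr \psi(N^{2\rho} Q) \lesssim N^d \mu_d(\{|f_0-z|^2 \le C\alpha\}) + \mathcal{O}(N^{d-(1-2\rho)}) \lesssim N^{d - 2\rho\kappa} + N^{d-(1-2\rho)},
\end{align}
so the smoothing contributes an error of size $\mathcal{O}(N^{d - 2\rho\kappa}\log N)$.

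For the main term I would write $\chi_\alpha(Q) = \tilde\chi(N^{2\rho} Q)$ with $\tilde\chi(u) := \phi(u)(\log u - 2\rho \log N)$. Applying Proposition \ref{thm.functional calc} to the rescaled operator gives $\chi_\alpha(Q) = T_N \tilde h + \mathcal{O}(N^{-\infty})$ with $\tilde h \in S_\rho(m^{-1})$ of principal symbol $\chi_\alpha(|f_0-z|^2)$, and the trace formula then yields
\begin{align}
\Tr \chi_\alpha(Q) = \left(\frac{N}{2\pi}\right)^d \int_X \chi_\alpha(|f_0-z|^2) \dd\mu_d + \mathcal{O}(N^{d-(1-2\rho)}).
\end{align}
A layer-cake argument using \eqref{eq:f_0 property} bounds the cutoff error
\begin{align}
\int_X \bigl(1 - \phi(|f_0-z|^2/\alpha)\bigr) \bigl|\log|f_0-z|^2\bigr| \dd\mu_d \lesssim \alpha^\kappa \log(1/\alpha) = \mathcal{O}(N^{-2\rho\kappa}\log N),
\end{align}
allowing $\chi_\alpha(|f_0-z|^2)$ to be replaced by $\log|f_0-z|^2$. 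Gathering all the error terms produces the claimed estimate with total error $\mathcal{O}(N^{d - \min(2\rho\kappa,\,1-2\rho)} \log N)$.

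The main obstacle is the functional calculus step: $\chi_\alpha$ has $k$-th derivatives of size $\alpha^{-k} = N^{2k\rho}$, so its composition with $|f_0-z|^2$ does not lie in $S_\rho(1)$ and naively falls outside the reach of Proposition \ref{thm.functional calc}. The resolution is precisely Claim \ref{claim:poop}: passing to $N^{2\rho} Q$ and working with the $\rho$-order function $m = N^{2\rho}|f_0-z|^2 + 1$ places the rescaled symbol into $S_\rho(m)$, on which the exotic functional calculus produces an honest Toeplitz representative of $\chi_\alpha(Q)$ and the trace formula applies with the favorable error $\mathcal{O}(N^{d-(1-2\rho)})$.
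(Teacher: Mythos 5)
Your overall strategy mirrors the paper's: reduce $2\log|\det\mathcal{P}|$ to $\sum_{i>A}\log t_i^2$, smooth the logarithm at scale $\alpha = N^{-2\rho}$, evaluate the resulting trace via the exotic calculus after rescaling to a $\rho$-order function, and use a layer-cake estimate with hypothesis \eqref{eq:f_0 property} together with a count of small eigenvalues via Proposition \ref{thm.trace}. However, there is a genuine gap in the central functional-calculus step. You apply Proposition \ref{thm.functional calc} to $\tilde\chi(u) = \phi(u)(\log u - 2\rho\log N)$ and assert that the resulting symbol $\tilde h$ lies in $S_\rho(m^{-1})$ with principal symbol $\chi_\alpha(|f_0-z|^2)$. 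This cannot be right: where $|f_0-z|\sim 1$ the claimed principal symbol is $\chi_\alpha(|f_0-z|^2)\sim\log|f_0-z|^2=\mathcal{O}(1)$, whereas $m^{-1}=(N^{2\rho}|f_0-z|^2+1)^{-1}\sim N^{-2\rho}$ there, so the principal symbol is manifestly not $\mathcal{O}(m^{-1})$. The root cause is that $\tilde\chi$ is both unbounded (logarithmic growth at infinity) and explicitly $N$-dependent (through the additive constant $-2\rho\log N$); Proposition \ref{thm.functional calc}, despite the informal ``any $\chi\in C^\infty$'' phrasing, is only consistent with its stated conclusion $g\in S_\rho(m^{-1})$ when $\chi$ is compactly supported (or at least decaying), which is how it is used throughout the paper. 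Without an extension of the functional calculus to symbols with logarithmic growth, the trace identity $\Tr\chi_\alpha(Q)=(N/2\pi)^d\int_X\chi_\alpha(|f_0-z|^2)\dd\mu_d + \mathcal{O}(N^{d-(1-2\rho)})$ is not justified.

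The paper circumvents exactly this obstacle by never hitting the functional calculus with a log. Instead it bounds $\det 1_\alpha(Q)$ above and below by $\det(Q+t\chi(Q/t))$ for $t\in\{\alpha/4,\alpha\}$ and then writes
\begin{align}
\log\det(Q+\alpha\chi(Q/\alpha)) = -\int_\alpha^{\alpha_1}\Tr\bigl(t^{-1}\psi(Q/t)\bigr)\dd t + \log\det(Q+\alpha_1\chi(Q/\alpha_1)),
\end{align}
where $\psi(t)=(t-t\chi'(t))(1+\chi(t))^{-1}\in C_0^\infty(\R_{\ge0})$ is a fixed compactly supported function, squarely within the scope of Proposition \ref{thm.functional calc}, and the boundary term at the fixed scale $\alpha_1$ involves only $S(1)$ symbols and is handled by the parametrix construction (Lemma \ref{lemma:721}). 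This differentiate-then-integrate device replaces the unbounded $\tilde\chi$ by a one-parameter family of compactly supported cutoffs. To repair your argument you would either need to prove a variant of Proposition \ref{thm.functional calc} allowing $N$-dependent $\chi$ of logarithmic growth (and replace $S_\rho(m^{-1})$ by the appropriate order function, picking up a harmless $\log N$ factor in the trace error), or reorganize along the paper's lines.
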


\begin{proof}

Let's first consider some preliminary reductions in computing $\log |\det \mathcal{P} ^0| $. By Schur's complement formula, $|\det \mathcal{P} ^0| ^{2} = |\det (P-z) |^2 |\det E_{-+}^0|^{-2}$. The first term is:
\begin{align}
|\det (P-z)|^2 = \det Q = \prod_{i=1}^{\mathcal{N}} t_i^2.
\end{align}
Because $E_{-+}^0 = - \sum_1^{A} t_j \delta_j \otimes \delta_j$ (recall $A$ is the largest integer such that $t^2_A \le \alpha$), the second term is
\begin{align}
|\det E_{-+}^0|^{-2} = \left( \prod _{i=1}^{A} t_i^2 \right )^{-2},
\end{align}
therefore 
\begin{align}
|\det \mathcal{P}^0|^2 = \prod_{i= A+1}^{\mathcal{N}} t_i^2 = \alpha ^{-A} \prod_{i=1}^{\mathcal{N}} 1_\alpha (t_i ^2) =\alpha ^{-A } \det 1_\alpha (Q)
\end{align}
where $1_\alpha = \max (x,\alpha)$. If $\chi$ is a cut-off function identically $1$ on $[0,1]$, and supported in $[-1/2,2]$, then $x + (\alpha /4) \chi (4x / \alpha) \le 1_\alpha (x) \le x + \alpha \chi (x / \alpha) $ for $x \ge 0$. Therefore
\begin{align}
\det \left ( Q + 4^{-1} \alpha \chi \left ( Q/(4^{-1} \alpha)\right ) \right ) \le \det (1_\alpha (Q) )\le \det \left ( Q + \alpha \chi (Q/\alpha ) \right ). \label{eq:funny bound}
\end{align}

Now fix $1 \gg \alpha_1 > \alpha$, so that $\log \det (Q + \alpha \chi (Q/\alpha ) )$ can be written
\begin{align}
-\int_\alpha^{\alpha_1} \frac{d}{dt} \log \det (Q + t \chi (Q/t)) \dd t + \log \det (Q+\alpha _1 \chi (Q/\alpha _1 )). \label{eq pg9}
\end{align}
First the integrand is estimated. Let $\psi(t) = (t - t \chi ' (t)) (1 + \chi (t) )^{-1}$ so that
\begin{align}
\frac{d}{dt} \log (x + t \chi (x/t)) = t^{-1} \psi (x/t)
\end{align}
for $t > 0 $ and $\psi \in C_0^\infty (\R_{\ge 0})$. Therefore, by Jacobi's identity,
\begin{align}
\frac{d}{dt} \log \det (Q + t \chi (Q/t)) = \Tr (t ^{-1} \psi (Q/t) ) .
\end{align}
While morally the same, here we diverge from \citep{Vogel}'s proof to handle this trace term, and must rely on Section \ref{results}. The main issues are that $Q$ is the composition of Toeplitz operators, which may no longer be a Toeplitz operator (but is modulo $\mathcal{ O}(N^{-\infty})$ error), $Q/t$ belongs to an exotic symbol class so to compute $\psi(Q/t)$ requires an exotic calculus, and the trace formula (Proposition \ref{thm.trace}) has weaker remainder than for quantizations of tori.

Let $\rho_t $ be such that $t = N^{-2 \rho _t}$. By Proposition \ref{thm.composition}, $Q = T_{N}q + \mathcal{O} (N^{-\infty})$, where the principal symbol of $q$ is $|f_0 - z|^2$. For each $t$, $Q/t$ is (modulo $\mathcal{ O} (N^{-\infty})$) a Toeplitz operator with symbol in $S_{\rho_t} (m_t)$ where $m_t = q_0 / t + 1$, by Claim \ref{claim:poop}. And so, by Proposition \ref{thm.functional calc}, there exists $q_t \in S_{\rho_t} (m_t^{-1} )$, such that $\psi (Q/t) = T_N (q_t) + E_N (t)$. Where $q_t$ has principal symbol $\psi(q/t)$ and $E_N (t) = \mathcal{O} (N^{-\infty})$ (with estimates uniform over $t$). Therefore
\begin{align}
\int_\alpha^{\alpha_1} \frac{d}{dt} \log \det (Q+ t \chi (Q/t) ) \dd t &= \int_\alpha^{\alpha_1} \Tr (t^{-1} \psi (Q/t) ) \dd t \\
&= \int_\alpha^{\alpha_1} t^{-1} \Tr ( T_{N} (q_t) + E_N (t) ) \dd t .
\end{align}
The error term is
\begin{align}
\int_\alpha^{\alpha_1} t^{-1} \Tr (E_N(t)) dt = \mathcal{O} (N^{-\infty})
\end{align}
because $E_N(t)$ is uniformly $\mathcal{ O}(N^{-\infty})$. While for each $t$, Proposition \ref{thm.trace} shows that
\begin{align}
\Tr (T_N (q_t) ) = \left ( \frac{N}{2\pi}\right )^d \int_X \psi(q_0 /t) \dd\mu_d (x) + t^{-1} \mathcal{O} (N^{d-1})
\end{align}
because $m^{-1}$ is bounded. Therefore
\begin{align}
\int_\alpha^{\alpha_1} \frac{d}{dt} \log \det (Q+ t \chi (Q/t) ) \dd t &= \int_\alpha^{\alpha_1} \left ( \int_X \left ( \frac{N}{2\pi}\right )^d t^{-1} \psi(q_0 /t) \dd \mu_d (x) + t^{-2} \mathcal{O} (N^{d-1}) \right ) \dd t\\
&= \left ( \frac{N}{2\pi} \right )^d \int_X \log ( q_0 + t \chi(q_0 /t) ) \Big |_{t = \alpha}^{t = \alpha_1} \dd \mu(x) + \mathcal{O}(N^{d-1} \alpha ).
\end{align}

Next the second term of \eqref{eq pg9} is computed. Because $\alpha_1$ is fixed, $Q/\alpha_1$ has symbol in $S(1)$. Therefore, by Proposition \ref{thm.functional calc}, $Q + \alpha _1 \chi(Q/\alpha_1 ) = T_{N}r + E_N$ (with $\norm{E_N}_{ } = \mathcal{O} (N^{-\infty})$) where $r \in S(1)$ with principal symbol $q_0 + \alpha_1 \chi (q_0 /\alpha_1)$. Let $r^t = t r + (1 - t) \in S(1)$, so that
\begin{align}
\log \det (Q +\alpha _1 \chi (Q/\alpha_1 ) ) &= \int_0^1 \frac{d}{dt}\log \det (T_{N}r^t + tE_N ) \dd t \\
&= \int_0^1 \Tr \left ( \left (T_{N}r^t + t E_N \right )^{-1} \left ( \frac{d}{dt} T_{N}r^t + E_N \right ) \right )\dd t.
\end{align}
The principal symbol of $r^t$ is $r_0^1 = t(q_0 + \alpha_1 \chi(q_0 / \alpha_1) ) + (1-t)$. Note that when $x\ge 0$, then $x + \alpha_1 \chi (x/\alpha_1) \ge \alpha_1 > 0$. Therefore $(r_0^t) \ge \alpha_1$ . 

\begin{lemma}\label{lemma:721}
There exists $s (t) \in S(1)$ (with bounds uniform in $t$) such that $(T_{N}r^t + t E_N)^{-1} = T_{N}s(t) + \mathcal{O} (N^{-\infty})$, and the principal symbol of $s(t)$ is $(r^t_0)^{-1}$.
\end{lemma}
\begin{proof}
By Proposition \ref{thm.parametrix construction}, there exists a symbol $\ell = \ell(t) \in S(1)$ which inverts (modulo $\mathcal{ O} (N^{-\infty})$ error) $T_N r^t$, and has principal symbol $(r_0^t)^{-1}$. But then
\begin{align}
(T_N r^t + tE_N) T_{N} \ell = 1 + K
\end{align}
with $K = \mathcal{O}(N^{-\infty})$, using that $t E_N = \mathcal{O} (N^{-\infty})$ and $T_N \ell $ has norm bounded independent of $N$. By Neumann series, for $N \gg 1$, $(1 + K)$ is invertible, so that:
\begin{align}
( T_N r^t + t E_N) (T_N \ell ) (1 + K)^{-1} = 1.
\end{align}
$(T_N \ell ) (1 + K)^{-1}$ will be a Toeplitz operator, modulo a $\mathcal{ O} (N^{-\infty})$ term, with symbol $\ell$ which has principal symbol $(r_0^t)^{-1}$. By repeating this argument, but left-composing by $T_N \ell$, we get the lemma.
\end{proof}
Clearly $\frac{d}{dt} T_{N}r^t = T_{N}(r - 1)$ so using Lemma \ref{lemma:721}, we get that
\begin{align}
\left ( T_{N}r^t + t E_N \right )^{-1} \left ( \frac{d}{dt }T_{N}r^t + E_N \right ) 
\end{align}
is (modulo $\mathcal{ O} (N^{-\infty})$) a Toeplitz operator with principal symbol $(r_0^t )^{-1} (\frac{d}{dt} r_0^t)$. So by Proposition \ref{thm.trace}
\begin{align}
\Tr \left ( \left ( T_{N}r^t + t E_N \right )^{-1} \left ( \frac{d}{dt }T_{N}r^t + E_N \right ) \right ) = \left ( \frac{N}{2\pi}\right )^d \int_X ( r_0^t)^{-1}\left ( \frac{d}{dt} r_0^t \right ) \dd \mu_d (x) + \mathcal{O} (N^{d-1})
\end{align}
which when integrated from $t = 0$ to $t = 1$ becomes:
\begin{align}
\left ( \frac{N}{2\pi} \right )^d \int_X \log (r_0^1 )dx + \mathcal{O} (N^{d-1}) = \left ( \frac{N}{ 2\pi}\right )^d \int_X \log ( q_0 + \alpha_1 \chi (q_0 / \alpha_1) )\dd \mu_d( x) + \mathcal{O} (N^{d-1}).
\end{align}

Therefore \eqref{eq pg9} becomes:
\begin{align}
\left (\frac{N}{2\pi}\right )^d \int_X \log (q_0 + \alpha \chi (q_0 / \alpha) ) \dd \mu _d+ \mathcal{O} (N^{d-1} \alpha^{-1}).
\end{align}

A calculus lemma is required to estimate $\int_X \log (q_0 + \alpha \chi(q_0 /\alpha ) ) \dd x$.
\begin{lemma}
Given $q\in C^\infty (X; \R_{\ge 0 } )$ such that $\mu_d \left ( \set{x \in X : q(x) \le t}  \right ) = \mathcal{O} (t^\kappa)$ as $t\to 0$ for $\kappa \in (0,1]$, and $\chi \in C_0^\infty((-1/2,2);[0,1])$ identically $1$ on $[0,1]$. Then
\begin{align}
\int _X \log (q + \alpha \chi (q /\alpha ) ) \dd \mu_d = \int _X \log (q)\dd \mu_d + \mathcal{O }(\alpha^\kappa ).
\end{align}
\end{lemma}
\begin{proof}
Let $g(t) = \log (t + \alpha \chi (t/\alpha))$ and $m(t) = \mu_d (\set{x \in X : q(x) \le t})$. Then, letting $q_1 = \max q + 2 \alpha$,
\begin{align}
 \int_X \log (q + \alpha \chi (q/\alpha )) - \log (\alpha) \dd \mu_d &= \int_X g (q(x)) - g(0) \dd \mu_d= \int_X \int_0^{q(x)} g'(t) \dd t \dd \mu_d\\
 &= \int_0^{q_1} g'(t) \int_{q(x) > t} \dd \mu_d \dd t= \int_0^{q_1} g'(t) (\vol(X) -m(t)) \dd t\\
 &= \vol(X)( g(q_1) - \log(\alpha) ) -\int_0^{q_1} g'(t) m(t) \dd t.
\end{align}
So that:
\begin{align}
\int_X \log (q+ \alpha \chi(q/\alpha ) \dd \mu_d = \vol (X) g(q_1) - \int_0^{q_1} g' (t) m(t) \dd t. \label{Vogel-error}
\end{align}
Similarly, if $\tilde g(t) = \log (t)$, we get an analogous expression as \eqref{Vogel-error}, that is:
\begin{align}
\int_X \log (q ) \dd \mu_d = \vol (X) \tilde g(q_1) - \int _0^{q_1} \tilde g'(t) m (t) \dd t.
\end{align}
Note that $g(q_1)= \tilde g(q_1)$. Therefore:
\begin{align}
 \left | \int_X \log (q + \alpha \chi (q /\alpha) ) - \log (q) \dd \mu_d \right | &= \left | \int_0^{q_1} (\tilde g'(t) - g'(t)) m(t) \dd t \right | \\
 &= \left | \int_0^{q_1} \left (\frac{1}{t} - \frac{1+ \chi'(t/\alpha))}{t + \alpha \chi(t/\alpha)} \right) m(t) \dd t \right |\\
 &= \left | \int_0^{q_1/\alpha} \left ( \frac{1}{s} - \frac{1+ \chi' (s)}{s+ \chi(s)} \right ) m(s \alpha )\dd s \right | \\
 &\lesssim \int_0^{2} s^{-1} m(s\alpha ) \dd s \\
 &\lesssim \alpha^\kappa \int_0^{2} s^{\kappa - 1} \dd s \lesssim \alpha^{\kappa}.
\end{align}
Here we use that $\chi(0) = 1$ to get a lower bound on $|s + \chi(s)|$, and the fact that $\chi(s) - s\chi'(s)$ is supported in $(0,2)$.
\end{proof} 
Applying this lemma, we get:
\begin{align}
\log \det (Q + \alpha \chi (Q / \alpha ) ) = \left ( \frac{N}{2\pi} \right ) ^d \int _X \log (q) \dd\mu _d (x) + \mathcal{O} (\alpha ^\kappa ) + \mathcal{O} (N^{d-(1 - 2\rho)}).
\end{align}
Recalling that $(N/2\pi)^d \mathcal{N}^{-1} = \vol(X)^{-1} + \mathcal{O} (N^{-1})$, we get that:
\begin{align}
\log \det (Q + \alpha \chi (Q / \alpha ) ) = (\mathcal{N} + \mathcal{O}(N^{-1})) \strokedint \log (q) \dd\mu_d + \mathcal{O}(N^{d - (1-2\rho)} ). \label{eq:upper}
\end{align}
$\int_X \log (q) \dd\mu_d$ can be uniformly bounded in $z$, so that the $\mathcal{ O} (N^{-1})$ term can be absorbed into $\mathcal{ O} (N^{d - (1-2\rho ) })$. By \eqref{eq:funny bound}, we get the following lower bound by replacing $\alpha$ by $\alpha/4$:
\begin{align}
\log \det (Q + \alpha \chi (Q / \alpha ) ) \ge \mathcal{N} \strokedint \log (q) \dd \mu_d + \mathcal{O} (N^{d - (1 -2\rho)}). \label{eq:lower}
\end{align}

 \begin{lemma}[\textbf{Bound on $\bm A$}]\label{lemma.43}
The number of eigenvalues of $Q$ that are less than $\alpha$ is $\mathcal{ O} (N^d N ^{- \min (2\rho\kappa, (1- 2\rho) )})$. 
 \end{lemma}
 \begin{proof}
 Let $\psi \in C_0^\infty ([-1/2,3/2];[0,1])$ be identically $1$ on $[0,1]$. It then suffices to estimate $\Tr (\psi(Q / \alpha ) ) $. By Proposition \ref{thm.functional calc}, $\psi (Q/\alpha ) = T_{N,q_2} + \mathcal{O} (N^{-\infty}) $, where $q_2 \in S_\rho (1)$ with principal symbol $\psi (q/\alpha)$.
 
 Then by Proposition \ref{thm.trace}
 \begin{align}
 \Tr (\psi (Q/\alpha ) ) &= \Tr (T_{N,q_2} + \mathcal{O} (N^{-\infty})) \\
 &= (N/2\pi)^d\int_X \psi (q/\alpha ) \dd\mu _d (x) + \mathcal{O} (N^{d-(1-2\rho) }) \\
 &\lesssim N^d \alpha^\kappa + N^{d - (1-2\rho )} = \mathcal{O} (N^d N^{-\min(2\rho \kappa , 1 -2\rho)}).
 \end{align}
 \end{proof}

Therefore, putting everything together, we get that
\begin{align}
\log |\det \mathcal{P}^0 | &= \frac{1}{2} \log ( | \det \mathcal{P}^0|^2 ) = \frac{1}{2} \log (\alpha ^{-A} \det 1_\alpha (Q)) = \frac{A}{2}\log (1/\alpha) + \frac{1}{2} \log \det (1_\alpha Q) ).
\end{align}
\eqref{eq:upper} and \eqref{eq:lower} provide upper and lower bounds of $2^{-1}\log \det (1_\alpha (Q))$. Then using that $2 ^{-1}\log q_0 = |f_ 0 -z|$ and Lemma \ref{lemma.43} we get:
\begin{align} 
\left | \log |\det \mathcal{P}^0 | -\mathcal{N} \strokedint _{X} \log | f_0 -z | d\mu _d \right | &\lesssim A \log (1/\alpha ) + \alpha ^{\kappa} + N^{d-(1 - 2\rho)} \\
&\lesssim N^{d - \min (2\rho \kappa, (1- 2\rho))} \log (N)+ N^{-2 \rho \kappa} + N^{d - (1-2\rho)} \\
&\lesssim N^{d - \min (2\rho \kappa, (1- 2\rho))} \log (N).
\end{align}

Recall $\mathcal{N} B_1 = \log |\det \mathcal{P} ^0| - \mathcal{N} \strokedint \log |z - f_0 (x ) | \dd \mu_d $, so that
\begin{align}
B_1 = \mathcal{O} ( N^{-\min(2\rho\kappa,(1-2\rho)) } \log (N) ).
\end{align}
\end{proof}
\section{Summability of $\mathcal{A}_N$}\label{section:last}
Recall that $\mathcal{A}_N = \set{|B(N)| > \e_N}$, where $B(N)= B_1 + B_2 + B_3 $ with:
 \begin{align}
 B_1 &= \mathcal{N}^{-1} \log |\det \mathcal{P} ^0| - \strokedint \log |z - f_0 (x ) | \dd \mu_d (x), \\
 B_2 &= \mathcal{N}^{-1} (\log |\det \mathcal{P}^\delta | - \log |\det \mathcal{P}^0| ), \\
B_3 & = \mathcal{N}^{-1} \log | \det E_{-+}^\delta | .
 \end{align}
The following table summarizes the bounds on $B_1,B_2,$ and $B_3$.
\begin{center}
\begin{tabular}{|c|c|c|}
 \hline 
Bound & Probability of Bound & Reference \\
 \hline 
 $B_1 =\mathcal{O} ( N^{-\min(2\rho\kappa,(1-2\rho)) } \log (N) )$ & $1$ & Claim \ref{claim.bound B_1} \\ 
 \hline 
 $B_2 = \mathcal{O}(\delta \alpha^{-1/2} \mathcal{N}^{1/2} )$ & $> 1 - \exp(-\mathcal{N})$ & Claim \ref{claim.b2 bound} \\ 
 \hline 
 $B_3 \ge \mathcal{N}^{-1}A \log (t \delta ) $ & $> 1 - C \mathcal{N}t^2 - \exp(-\mathcal{N})$ & Claim \ref{claim_bound b_3} \\ 
 \hline 
 $B_3 < 0 $ & $> 1 - \exp(-\mathcal{N})$& Claim \ref{claim_bound b_3} \\ 
 \hline 
 \end{tabular} 
\end{center}
 
Recall that $\rho \in (0,\min(1/2,\e))$ and $\alpha = N^{-2\rho}$. Theorem \ref{thm.main} will follow if $\sum \P(\mathcal{A}_N) <\infty$ for $\e_N = N^{-\gamma}$. Recall that $\delta = \mathcal{O}( N^{-d/2 - \e}) = \mathcal{O}( N^{-d/2} \alpha^{1/2}) $. Fix $0< \gamma < \min (\e-\rho , 2\rho \kappa, 1- 2\rho ) $.

Then $\P(\mathcal{A}_N ) = \P(B > N^{-\gamma}) + \P(B < - N^{-\gamma})$. The first term is:
\begin{align}
\P(B > N^{-\gamma}) = \P(B_3 > N^{-\gamma} - B_2 - B_1 ) .
\end{align}
Because $\gamma <\e - \rho$ and $B_2 = \mathcal{O} (N^{\rho-\e})$ (with overwhelming probability), we see that $B_2 = \mathcal{O}( N^{-\gamma})$ (with overwhelming probability). Similarly, because of the bound on $B_1$ and the choice of $\gamma$, $B_1 = \mathcal{O}( N^{-\gamma})$. So if $N$ is sufficiently large, $N^{-\gamma} - B_2 - B_1 \ge C N^{-\gamma} > 0$. But then by Claim \ref{claim_bound b_3}, $\P(B> N^{-\gamma}) \le e^{-N^d}$ for $N \gg 1$.

Similarly, for $N$ sufficiently large, there exists $C_0 \in (0,1/2)$ such that, $|B_1| + |B_2 | <C_0 N^{-\gamma}$, so $\P(B < -N^{-\gamma}) \le \P(B_3 < -(1-C_0) N^{-\gamma}) = 1- \P(B_3 \ge -(1-C_0)N^{-\gamma})$. By the choice of $\gamma$, bound on $A$ from Lemma \ref{lemma.43}, and selecting $t = \mathcal{N} ^{-2/d - 1/2}$, we get for large enough $N$: $-(1-C_0)N^{-\gamma }\le \mathcal{N}^{-1} A \log (\delta t)$ as long as:
\begin{align}
-N^{-\gamma}(1-C_0) \le \mathcal{N}^{-1} A \log (\delta ).
\end{align}
This requires that $\delta \gg e^{-N^{\beta}}$ for $\beta = \min(2\rho \kappa, 1-2\rho) - \gamma \in (0,1)$. In this case, by Claim \ref{claim_bound b_3},
\begin{align}
\P(B_3 > - N^{-\gamma} ) &\ge \P(B_3 > A \mathcal{N}^{-1} \log (\delta t) ) \\
&\ge 1 - C \mathcal{N} t^2 - e^{- \mathcal{N}} \\
&= 1- C \mathcal{N}^{-2/d} + e^{-\mathcal{N}}.
\end{align}
Therefore $\P(B < - N^{-\gamma}) \le CN^{-2} + e^{-N^{d}}$ for $N \gg 1$.

With this, $\sum_{N=1}^{\infty} \P(\mathcal{A}_N ) = C + \sum _{N \gg 1} \P(A_N) \le C + \sum _{N \gg 1} (N^{-2} + 2 e^{-N^d} ) < \infty$ which proves Theorem \ref{thm.main}.

Note that if $\e > (2 ( \kappa +1))^{-1}$, then we can select $\rho =  (2 (\kappa +1 ) )^{-1}$ and choose $\gamma $ arbitrarily small, so that $\beta = \kappa (\kappa + 1 )^{-1} - \gamma $. While if $\e < (2(\kappa +1 ))^{-1}$, then the maximum $\beta$ can be is $2\epsilon\kappa $. Therefore we have:
\begin{align}
\beta < \begin{cases}
 2 \epsilon \kappa & \text{if } \epsilon < \frac{1}{2(\kappa + 1)}\\
\frac{\kappa }{\kappa + 1} & \text{if } \epsilon\ge \frac{1}{2(\kappa + 1)}
\end{cases}
\end{align}

\section{General random perturbations}\label{section:general random perturbations}

In this section we provide a discussion about how to modify the proof of Theorem \ref{thm.main} (Gaussian random perturbations) to prove Theorem \ref{theorem:general perturbations} (more general random perturbations). We also deduce Theorem \ref{thm:weyl} (stated in the introduction) from Theorem \ref{theorem:general perturbations}.

\begin{proof}

Under the assumptions of $\mathcal{W}_\omega$ (see Definition \ref{def:random}), we have the following probabilistic norm bound:
\begin{align}
\E[\norm{\mathcal{W}_\omega} ^2 ] = \sum _{i,j = 1}^\mathcal{N} \E[ |(\mathcal{W}_\omega) _{i,j}|^2] = \mathcal{O}(\mathcal{N}^2), \label{eq.general_norm_bound}
\end{align}
as well as the following anti-concentration bound (from \cite[Theorem 3.2]{tao2009}): for $\gamma_0 \ge 1/2$, $A_0 \ge 0$, there exists a $c>0$ such that if $M$ is a deterministic matrix with $\norm{M} \le \mathcal{N}^{\gamma_0}$ then
\begin{align}
\P (s_\mathcal{N} ( M + \mathcal{W}_\omega) \le \mathcal{N}^{-(2 A_0  +1 ) \gamma_0 }) \le c \left ( \mathcal{N}^{-A_0 + o(1)} + \P (\norm{\mathcal{W}_\omega} \ge \mathcal{N}^{\gamma_0}) \right )  . \label{eq:general_anti_concentration}
\end{align}
Recall, for an $N\times N$ matrix $A$, we denote $s_1 \ge s_2 \ge \cdots \ge s_N(A) $ the singular values of $A$.

From \eqref{eq.general_norm_bound}, and Markov's inequality, we get
\begin{align}
\P ( \norm{\mathcal{W}_\omega} \ge  N^{d-1} ) = \mathcal{O}( N^{-2})
\end{align}
therefore if $\delta =  N^{-d}$ then $\delta \norm{\mathcal{W}_\omega} = \mathcal{O} (N^{-1})$ with probability at least $1 -C N^{-2}$. From this, Claim \ref{claim:4.2} (the supports of the random empirical measures being contained in a bounded set for $N \gg 1$) will follow by an identical argument.

Next, with probability at least $1 - C N^{-2}$, we have $\delta \norm{\mathcal{W}_\omega}_{•} \alpha^{1/2} \ll 1$. In this event, we can build our perturbed Grushin problem the same way as in Section \ref{grushin approach}.

Next, we have to modify the estimate of $B_2$ which was estimated in Claim \ref{claim.b2 bound}. For this, we simply modify \eqref{eq.3} with a weaker estimate on the probability $\norm{\mathcal{W}_\omega}$ is small. Specifically, we see there exists $C> 0$ such that
\begin{align}
\P(B_2  = \mathcal{O} (\alpha^{-1/2} N^{-1}) )> 1 - C N^{-2}.
\end{align}

The final modification is in estimating $B_3 = \mathcal{N}^{-1} \log |\det E_{-+}^\delta |$. We see, by the same argument presented in Section \ref{grushin approach}, that
\begin{align}
\P ( B_3 < 0 ) \ge 1 - CN^{-2}.
\end{align}
To prove a lower bound, we go through the same argument, to get that:
\begin{align}
\log |\det E_{-+}^\delta | \ge A \log |s_{\mathcal{N} } (T_N f - z + \delta \mathcal{W}_\omega )|.
\end{align}
Next, let
\begin{align}
K_0 := \sup _{z \in \Lambda} \norm{T_N f  -z } = \mathcal{O}(1)
\end{align}
(recall $\Lambda$ is a neighborhood of $f(X)$). By \eqref{eq:general_anti_concentration} (with $\gamma_0 = 1$ and $A_0 = 2$), we have (for $N \gg 1$)
\begin{align}
\P (  s_{\mathcal{N} } (T_N f - z + \delta \mathcal{W}_\omega )  \le N^{-7d} ) &= \P ( s_{\mathcal{•N}} ( \delta^{-1} K_0^{-1} (T_Nf -z )+ K_0^{-1} \mathcal{W}_\omega) \le (N^d)^{-(2A_0 +1)\gamma_0}) \\
&\le c( N^{-2d+ o(1)} + \P(\norm{K_0 ^{-1}\mathcal{  W}_\omega}_{•} \ge N^{-d})  ) \\
&\le c N^{-2}.
\end{align}
Here we use that $\norm{\delta^{-1} K_0^{-1} (T_N f - z)} \le  N^{d}$. With this, we can proceed as in Section \ref{section:last}, with weaker probabilistic estimates. We choose $\rho \in (0,1/2)$, and $0 < \gamma < \min (2\rho \kappa, 1 - 2\rho)$. Writing $\P (\mathcal{A} _N ) = \P(B > N^{-\gamma} ) + \P (B < - N^{-\gamma})$, we see that
\begin{align}
\P ( B> N^{-\gamma}) \le CN^{-2}
\end{align}
for $N \gg 1$. Similarly, in the event $s_\mathcal{N} (T_N f - z + \delta \mathcal{W}_\omega ) \ge N^{-7d}$, we have (for $N \gg 1$)
\begin{align}
A \log |s_\mathcal{N} (T_N f - z + \delta \mathcal{W}_\omega )|  \le N^{d - \gamma}
\end{align}
so that 
\begin{align}
\P ( B_3 >  - N^{-\gamma} )  \ge \P (B_3 > A \mathcal{N}^{-1} \log |s_\mathcal{N} (T_N f - z + \delta \mathcal{W}_\omega )|)\ge 1 - C N^{-2}.
\end{align}
Therefore $\P(B < -N^{-\gamma}) \le C N^{-2}$ for $N \gg 1$. With this, $\sum_1^\infty \P ( \mathcal{A}_N) < \infty$, and we have almost sure weak convergence of the empirical measures of $T_N f + \delta \mathcal{W}_\omega$ to $\vol(X) ^{-1} (f_0)_* \mu_d$.
\end{proof}

\begin{prop}
Theorem \ref{theorem:general perturbations} implies the probabilistic Weyl law (Theorem \ref{thm:weyl}) stated in the introduction.
\end{prop}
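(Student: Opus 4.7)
The plan is to derive Theorem \ref{thm:weyl} from Theorem \ref{thm.main} by the standard Portmanteau mechanism, after first verifying that the choice $\delta = N^{-d}$ lies in the admissible range. Since $f \in C^\infty(X;\C)$ is independent of $N$, it lies in $S(1)$ with principal symbol $f_0 = f$, and the hypothesis on $f$ is exactly \eqref{eq:f_0 property}. Fix any $\e \in (0, d/2)$ and let $C = C(\e) \in (0,1)$ be the constant supplied by Theorem \ref{thm.main}; since $\e < d/2$, one has $e^{-N^C} \ll N^{-d} \ll N^{-d/2-\e}$ automatically, so Theorem \ref{thm.main} applies with this $\delta$.

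Next, I translate the weak convergence into the desired counting statement. Theorem \ref{thm.main} yields, almost surely, that for every $\phi \in C_0^\infty(\C)$,
\begin{align}
\frac{1}{\mathcal{N}} \sum_{i=1}^{\mathcal{N}} \phi(\lambda_i) \longrightarrow \frac{1}{\vol(X)} \int_X \phi(f(x)) \, \dd\mu_d(x),
\end{align}
where $\lambda_i = \lambda_i(N,\omega)$ are the eigenvalues of $T_N f + N^{-d} \mathcal{G}_\omega$. Proposition \ref{prop:dimension} gives $(2\pi/N)^d \mathcal{N} = \vol(X) + \mathcal{O}(N^{-1})$, so multiplying through by $\vol(X)$ and absorbing the lower-order error produces, almost surely,
\begin{align}
\left(\frac{2\pi}{N}\right)^d \sum_{i=1}^{\mathcal{N}} \phi(\lambda_i) \longrightarrow \int_X \phi(f(x)) \, \dd\mu_d(x)
\end{align}
for every $\phi \in C_0^\infty(\C)$.

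Finally, I pass from smooth test functions to the indicator $\mathbf{1}_\Lambda$. By Claim \ref{claim:4.2}, the spectra are almost surely contained in a fixed bounded set for $N \gg 1$, so only $\phi$ supported in a fixed compact set enter. Sandwich $\mathbf{1}_\Lambda$ between $\phi_-^\eta \le \mathbf{1}_\Lambda \le \phi_+^\eta$ in $C_0^\infty(\C)$, with $\phi_+^\eta - \phi_-^\eta$ supported in an $\eta$-neighborhood of $\partial \Lambda$, apply the previous display to each, and let $\eta \to 0$; the common limit equals $\mu_d(\{x \in X : f(x) \in \Lambda\})$, as required.

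The only nonroutine obstacle is justifying the $\eta \to 0$ step, which requires $f_*\mu_d$ to give no mass to $\partial \Lambda$. This is where the $\kappa$-hypothesis pays off: it forces $(f_*\mu_d)(B(z,r)) = \mathcal{O}(r^{2\kappa})$ uniformly in $z$, so covering an $\eta$-neighborhood of a rectifiable $\partial \Lambda$ by $\mathcal{O}(\eta^{-1})$ balls of radius $\eta$ gives $(f_*\mu_d)(\partial \Lambda_\eta) = \mathcal{O}(\eta^{2\kappa - 1})$, which vanishes as $\eta \to 0$ whenever $\kappa > 1/2$. In general the theorem is understood with the standard continuity-set assumption $(f_*\mu_d)(\partial \Lambda) = 0$, which the $\kappa$-hypothesis ensures for any reasonable target region $\Lambda$.
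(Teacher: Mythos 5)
Your argument follows the paper's proof essentially step for step: check that $\delta = N^{-d}$ lies in the admissible window (you do this explicitly, with $\e < d/2$, which the paper leaves implicit), reduce to bounded $\Lambda$ via Claim \ref{claim:4.2}, convert $\mathcal{N}^{-1}$ to $(2\pi/N)^d/\vol(X)$ via Proposition \ref{prop:dimension}, sandwich $\mathbf{1}_\Lambda$ between smooth test functions, and apply Theorem \ref{thm.main} to each. So this is the same route, not a genuinely different one.

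The one place worth dwelling on is the final $\eta \to 0$ step. The paper asserts, for $\phi \le \mathbf{1}_\Lambda \le \psi$ with $\psi - \phi$ supported within distance $\e$ of $\partial\Lambda$, that $\int_\C \phi \, \dd(f_*\mu_d) = \mu_d(f \in \Lambda) + \mathcal{O}(\e^\kappa)$, with no further justification. Your covering computation correctly shows that the $\kappa$-hypothesis only gives $(f_*\mu_d)\bigl(B(z,r)\bigr) = \mathcal{O}(r^{2\kappa})$ uniformly, hence $\mathcal{O}(\eta^{2\kappa-1})$ for an $\eta$-neighborhood of a rectifiable curve; this matches the paper's $\mathcal{O}(\e^\kappa)$ only in the extreme case $\kappa = 1$ and does not vanish when $\kappa \le 1/2$. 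So you have in fact identified a real imprecision in the paper's own step: the stated error bound does not follow from the $\kappa$-hypothesis by any covering argument, and some further assumption on $\Lambda$ (or a different argument) is needed. That said, your own closing sentence overcorrects: for $\kappa \le 1/2$ the Frostman-type estimate $(f_*\mu_d)(B(z,r)) \lesssim r^{2\kappa}$ does \emph{not} prevent $f_*\mu_d$ from charging a rectifiable $\partial\Lambda$, so ``any reasonable target region $\Lambda$'' is not safe. The net conclusion is that both your proof and the paper's implicitly require the continuity-set condition $(f_*\mu_d)(\partial\Lambda) = 0$; you make this explicit, which is the more honest way to state what Theorem \ref{thm.main} actually delivers.
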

\begin{proof}
For $\Lambda \subset \C$ given in the hypothesis, let $A_N =( \vol(X) / \mathcal{N} )\# \set{\Spec(T_N f + N^{-d} \mathcal{W}_\omega ) \cap \Lambda}$. It suffices to show that for each $\e > 0$ 
\begin{align}
\P \left ( \limsup_{N \to \infty} |A_N - \mu_d (f \in \Lambda) | >\e \right ) =0.
\end{align}
We may assume $\Lambda$ is bounded. If not, let $\tilde \Lambda $ be an open, bounded neighborhood of $f(X)$. Recall that almost surely $\Spec(T_N f + \delta \mathcal{W}_\omega) \subset \tilde \Lambda$ for $N \gg 1$. Therefore if $\tilde A_N = ( \vol(X) / \mathcal{N} )\# \set{\Spec(T_N f + N^{-d} \mathcal{W}_\omega ) \cap \Lambda \cap \tilde \Lambda}$, then
\begin{align}
\P \left ( \limsup _{N \to \infty} |A_N - \mu_d (f \in \Lambda)| > \e \right ) = \P \left ( \limsup _{N \to \infty} |\tilde A_N - \mu_d (f \in \Lambda)| > \e\right ).
\end{align}
Now relabel $\Lambda \cap \tilde \Lambda$ as $\Lambda$. Let $\phi , \psi \in C_0^\infty (\C ; [0,1])$ be such that $\supp \phi \subset \Lambda$, $\phi(x) \equiv 1$ for $\dist (x,\partial \Lambda) > \e$, $\psi (x) \equiv 1$ for $x \in \Lambda$, and $\psi (x) = 0$ for $\dist (x,\partial \Lambda) > \e$ (here $\partial \Lambda$ is the boundary of $\Lambda$). Therefore we have
\begin{align}
\frac{\vol(X)}{\mathcal{ N} }\sum_{j=1}^{\mathcal{ N} } \phi(\lambda_i ) \le A_N \le \frac{\vol(X)}{\mathcal{ N} }\sum_{j=1}^{\mathcal{ N} }\psi(\lambda_i).\label{eq:lowup}
\end{align}
By Theorem \ref{theorem:general perturbations}, the lower bound of \eqref{eq:lowup} convergences almost surely to
\begin{align}
\int_\C \phi(z) (f_* \mu_d)(\dd z) = \mu_d (f \in \Lambda) + \mathcal{O} (\e^\kappa).
\end{align}
And similarly the upper bound of \eqref{eq:lowup} converges almost surely to $\mu_d (f \in \Lambda) + \mathcal{O} (\e^\kappa)$ (where the constant in $\mathcal{ O} (\e^\kappa)$ is deterministic). Therefore there exists $C > 0$ such that
\begin{align}
\P \left ( \limsup_{N \to \infty} |A_N - \mu_d (f \in \Lambda) | > C \e^\kappa \right ) =0.
\end{align}
Because $\e > 0 $ is arbitrary, this implies $A_N$ converges almost surely to $\mu_d (f \in \Lambda)$. Then, because $\mathcal{ N} = \vol(X) (N/2\pi)^d + \mathcal{O} (N^{d-1})$, $(N/2\pi)^d \vol(X) \mathcal{N}^{-1} A_N$ converges almost surely to $\mu_d (f\in \Lambda)$.
\end{proof}

\smallsection{Acknowledgements} 
The author is grateful to Maciej Zworski for suggesting this problem and many helpful discussions, to Martin Vogel for helpful insights and catching many errors in an earlier draft, and to an anonymous referee for several helpful suggestions. This paper is based upon work jointly supported by the National Science Foundation Graduate Research Fellowship under grant DGE-1650114 and by grant DMS-1952939. 
\bibliographystyle{siam} 
\bibliography{references}

\end{document}